\documentclass[11pt]{amsart}
\usepackage{amsmath, amssymb, amsfonts, amsthm, verbatim, dsfont, graphicx}
\usepackage{hyperref}
\usepackage{color}

\headheight=8pt
\topmargin=0pt
\textheight=610pt
\textwidth=432pt
\oddsidemargin=18pt
\evensidemargin=18pt

\newtheorem{theorem}{Theorem}[section]
\newtheorem{proposition}[theorem]{Proposition}

\newtheorem{lemma}[theorem]{Lemma}
\newtheorem{definition}[theorem]{Definition}
\theoremstyle{definition} \newtheorem{remark}[theorem]{Remark}

\newcommand{\bR}{{\mathbb R}}
\newcommand{\bN}{{\mathbb N}}
\newcommand{\bC}{{\mathbb C}}
\newcommand{\bT}{{\mathbb T}}
\newcommand{\bZ}{{\mathbb Z}}

\newcommand{\bP}{{\mathbb P}}

\newcommand{\cF}{{\mathcal{F}}}

\newcommand{\cH}{{\mathcal{H}}}

\begin{document}

\title[]{Random data Cauchy theory for nonlinear wave equations of power-type on $\bR^3$}
\begin{abstract}
 We consider the defocusing nonlinear wave equation of power-type on $\bR^3$. We establish an almost sure global existence result with respect to a suitable randomization of the initial data. In particular, this provides examples of initial data of super-critical regularity which lead to global solutions. The proof is based upon Bourgain's high-low frequency decomposition and improved averaging effects for the free evolution of the randomized initial~data.
\end{abstract}

\author[]{Jonas L\"uhrmann}
\address{Departement Mathematik \\ ETH Z\"urich \\ 8092 Z\"urich \\ Switzerland}
\email{jonas.luehrmann@math.ethz.ch}

\author[]{Dana Mendelson}
\address{Department of Mathematics \\ Massachusetts Institute of Technology \\ 77 Massachusetts Ave \\ Cambridge, MA 02139 \\ USA}
\email{dana@math.mit.edu}

\thanks{\textit{2010 Mathematics Subject Classification.} 35L05, 35R60, 35Q55}
\thanks{\textit{Key words and phrases.} nonlinear wave equation; almost sure global well-posedness; random initial data}
\thanks{The first author was supported in part by the Swiss National Science Foundation under grant SNF 200021-140467. The second author was supported in part by the U.S. National Science Foundation grant DMS-1068815 and by the NSERC Postgraduate Scholarships Program}

\maketitle

\section{Introduction}
\setcounter{equation}{0}

We consider the Cauchy problem for the defocusing nonlinear wave equation
\begin{equation} \label{equ:ivp}
 \left\{ \begin{split}
  -u_{tt} + \Delta u &= |u|^{\rho-1} u \text{ on } \bR \times \bR^3, \\
  (u, u_t)|_{t=0} &= (f_1, f_2) \in H^s_x(\bR^3) \times H^{s-1}_x(\bR^3),
 \end{split} \right.
\end{equation}
where $3 \leq \rho \leq 5$ and $H^s_x(\bR^3)$ is the usual inhomogeneous Sobolev space. The main result of this paper establishes the almost sure existence of global solutions to~\eqref{equ:ivp} with respect to a suitable randomization of initial data in $H^s_x(\bR^3) \times H^{s-1}_x(\bR^3)$ for $3 \leq \rho < 5$ and $\frac{\rho^3+5\rho^2-11\rho-3}{9\rho^2-6\rho-3} < s < 1$. In particular, for $\frac{1}{4}(7+\sqrt{73}) < \rho < 5$, this yields examples of initial data of super-critical regularity for which solutions to \eqref{equ:ivp} exist globally in time. See Remark~\ref{remark:uniqueness} for a discussion of uniqueness of these solutions.

\medskip

A systematic investigation of the local well-posedness of \eqref{equ:ivp} for initial data in homogeneous Sobolev spaces is undertaken by Lindblad and Sogge in \cite{Lindblad_Sogge}, where local strong solutions to \eqref{equ:ivp} are constructed for $s \geq \frac{3}{2} - \frac{2}{\rho-1}$ using Strichartz estimates for the wave equation. When $s < \frac{3}{2} - \frac{2}{\rho-1}$, this is the super-critical regime and the well-posedness arguments based on Strichartz estimates break down. Recently, several methods have emerged proving ill-posedness for \eqref{equ:ivp} below the critical scaling regularity (see Lebeau \cite{L}, Christ-Colliander-Tao \cite{CCT} and Ibrahim-Majdoub-Masmoudi \cite{IMM}).

\medskip

Nevertheless, using probabilistic tools, it has been possible to construct large sets of initial data of super-critical regularity that lead to unique local and even global solutions to several nonlinear dispersive equations. This approach was initiated by Bourgain \cite{B94, B96} for the periodic nonlinear Schr\"odinger equation in one and two space dimensions. Building upon work by Lebowitz, Rose and Speer \cite{LRS}, Bourgain proved the invariance of the Gibbs measure under the flow of the equation and used this invariance to prove almost sure global well-posedness in the support of the measure. In \cite{BT1, BT2}, Burq and Tzvetkov consider the cubic nonlinear wave equation on a three-dimensional compact manifold. They construct large sets of initial data of super-critical regularity that lead to local solutions via a randomization procedure which relies on expansion of the initial data with respect to an orthonormal basis of eigenfunctions of the Laplacian. Together with invariant measure considerations they also prove almost sure global existence for the cubic nonlinear wave equation on the three-dimensional unit ball. Many further results in this direction have been obtained in recent years (see \cite{Tz08}, \cite{Tz10}, \cite{BT3}, \cite{TTz}, \cite{O2}, \cite{NORS}, \cite{NRSS}, \cite{Ric}, \cite{BB} and references therein).

\medskip

Although it is desirable to establish the existence of an invariant Gibbs measure, in many situations there are serious technical difficulties associated with defining such a measure. In the absence of an invariant measure, other approaches have been developed to show almost sure global existence for periodic super-critical equations via a suitable randomization of the initial data. Energy estimates are used by Nahmod, Pavlovi\'c and Staffilani \cite{NPS} in the context of the periodic Navier-Stokes equation in two and three dimensions and by Burq and Tzvetkov for the three-dimensional periodic defocusing cubic nonlinear wave equation. Colliander and Oh \cite{CO} adapt Bourgain's high-low frequency decomposition \cite{B98} to prove almost sure global existence of solutions to the one-dimensional periodic defocusing cubic nonlinear Schr\"odinger equation below $L^2(\mathbb{T})$. 

\medskip

In this work we study almost sure global existence for nonlinear wave equations on Euclidean space without any radial symmetry assumption on the initial data. Previous results on Euclidean space have involved first considering a related equation in a setting where an orthonormal basis of eigenfunctions of the Laplacian exists. This orthonormal basis is used to randomize the initial data for the related equation and an appropriate transform is then used to map solutions of the related equation to solutions of the original equation. Burq, Thomann and Tzvetkov \cite[Theorem 1.2]{BTT} prove almost sure global existence and scattering for the defocusing nonlinear Schr\"odinger equation on $\bR$ by first treating a one-dimensional nonlinear Schr\"odinger equation with a harmonic potential and then invoking the lens transform \cite[(10.2)]{BTT}. Subsequently, similar approaches were used by Deng \cite[Theorem 1.2]{D1}, Poiret \cite{Poiret1}, \cite{Poiret2} and Poiret, Robert and Thomann \cite[Theorem 1.3]{PRT} to study the defocusing nonlinear Schr\"odinger equation on $\bR^d$ for $d \geq 2$. In the context of the wave equation, Suzzoni \cite{Suzzoni1}, \cite[Theorem 1.2]{Suzzoni2} first considers a nonlinear wave equation on the three-dimensional sphere and then uses the Penrose transform \cite[(3)]{Suzzoni1} to obtain almost sure global existence and scattering for \eqref{equ:ivp} for $3 \leq \rho < 4$. 

\medskip

Instead of using such transforms, we will randomize functions directly on Euclidean space via a unit-scale decomposition in frequency space. More precisely, let $\varphi \in C_c^{\infty}(\bR^3)$ be a real-valued, smooth, non-increasing function such that $0 \leq \varphi \leq 1$ and
\begin{equation*}
 \varphi(\xi) = \left\{ 
  \begin{split}
   &1 \qquad \text{for } |\xi| \leq 1, \\
   & 0 \qquad \text{for } |\xi| \geq 2.
  \end{split} \right.
\end{equation*}
For every $k \in \bZ^3$ set $\varphi_k(\xi) = \varphi(\xi - k) $ and define
\begin{equation*}
 \psi_k(\xi) = \frac{\varphi_k(\xi)}{\sum_{l \in \bZ^3} \varphi_l(\xi)}.
\end{equation*}
Then $\psi_k$ is smooth with support contained in $\{ \xi \in \bR^3 : |\xi - k| \leq 2 \}$. Note that $\sum_{k \in \bZ^3} \psi_k(\xi) = 1$ for all $\xi \in \bR^3$. For $f \in L^2_x(\bR^3)$ define the function $P_k f: \bR^3 \rightarrow \bC$~by
 \begin{equation*}
  (P_k f)(x) = \cF^{-1} \left( \psi_k(\xi) \hat{f}(\xi) \right)(x) \text{ for } x \in \bR^3.
 \end{equation*} 
If $f \in H^s_x(\bR^3)$ for some $s \in \bR$, then $P_k f \in H^s_x(\bR^3)$ and $f = \sum_{k \in \bZ^3} P_k f \text{ in } H^s_x(\bR^3)$ with
 \begin{equation} \label{eq:norm_equiv}
  \|f\|_{H^s_x(\bR^3)} \sim \bigl( \sum_{k \in \bZ^3} \|P_k f\|^2_{H^s_x(\bR^3)} \bigr)^{1/2}.
 \end{equation}
In the proof of the almost sure existence of global solutions to \eqref{equ:ivp}, we will crucially exploit that these projections satisfy a unit-scale Bernstein inequality, namely that for all $2 \leq p_1 \leq p_2 \leq \infty$ there exists $C \equiv C(p_1, p_2) > 0$ such that for all $f \in L^2_x(\bR^3)$ and for all $k \in \bZ^3$ 
\begin{align} \label{equ:unit_scale_intro}
  \|P_k f\|_{L^{p_2}_x(\bR^3)} \leq C \|P_k f\|_{L^{p_1}_x(\bR^3)}.
\end{align}
For a formal statement of this inequality, see Lemma \ref{lem:unit_bernstein}. Let now $\{ (h_k, l_k) \}_{k \in \bZ^3}$  be a sequence of independent, zero-mean, real-valued random variables on a probability space $(\Omega, {\mathcal A}, \bP)$ with distributions $\mu_k$ and $\nu_k$. Assume that there exists $c > 0$ such that
 \begin{equation} \label{eq:rvassumption}
  \left| \int_{-\infty}^{+\infty} e^{\gamma x} \, d\mu_k(x) \right| \leq e^{c \gamma^2} \text{ for  all } \gamma \in \bR \text{ and for all } k \in \bZ^3,
 \end{equation}
and similarly for $\nu_k$. The assumption \eqref{eq:rvassumption} is satisfied, for example, by standard Gaussian random variables, standard Bernoulli random variables, or any random variables with compactly supported distributions. For a given $f = (f_1, f_2) \in H^s_x(\bR^3) \times H^{s-1}_x(\bR^3)$ we define its randomization by
 \begin{equation} \label{equ:bighsrandomization} 
  f^{\omega} = (f_1^{\omega}, f_2^{\omega}) := \biggl( \sum_{k \in \bZ^3} h_k(\omega) P_k f_1, \sum_{k \in \bZ^3} l_k(\omega) P_k f_2 \biggr).
 \end{equation}
The quantity $\sum_{k \in \bZ^3} h_k(\omega) P_k f_1$ is understood as the Cauchy limit in $L^2(\Omega; H^s_x(\bR^3))$ of the sequence $\Bigl( \sum_{|k| \leq N} h_k(\omega) P_k f_1 \Bigr)_{N \in \bN}$  and similarly for $\sum_{k \in \bZ^3} l_k(\omega) P_k f_2$. Let
 \begin{align} \label{eq:free_evolution}
  u_f^\omega = \cos(t|\nabla|) f_1^\omega + \frac{\sin(t|\nabla|)}{|\nabla|} f_2^\omega
 \end{align}
be the free wave evolution of the initial data $f^\omega$ defined in \eqref{equ:bighsrandomization}. 

\medskip

In the case of random variables such that there exists $c > 0$ for which their distributions satisfy
\begin{equation*}
\sup_{k\in \bZ^3} \mu_k([-c,c]) < 1,
\end{equation*}
one can show that if $f$ does not belong to $H^{s+\varepsilon}_x(\bR^3) \times H^{s-1+\varepsilon}_x(\bR^3)$, then the probability that $f^\omega$ belongs to $H^{s+\varepsilon}_x(\bR^3) \times H^{s-1+\varepsilon}_x(\bR^3)$ is zero. This follows by a similar argument to the one found in Appendix B of \cite{BT1}, using \eqref{eq:norm_equiv}. Thus, our randomization procedure does not regularize at the level of Sobolev spaces.

\medskip

We note that a similar randomization procedure on Euclidean space was used by Zhang and Fang in \cite[(1.12)]{ZF} to study random data local existence and small data global existence questions for the generalized incompressible Navier-Stokes equation on $\bR^d$ for $d \geq 3$.

\medskip

We are now in a position to state our results.
\begin{theorem} \label{thm:rho}
 Let $3 \leq \rho < 5$ and let 
 \begin{equation*}
  \frac{\rho^3+5\rho^2-11\rho-3}{9\rho^2-6\rho-3} < s < 1.
 \end{equation*}
 Let $f = (f_1, f_2) \in H^s_x(\bR^3) \times  H^{s-1}_x(\bR^3)$. Let $\{ (h_k, l_k) \}_{k \in \bZ^3}$  be a sequence of independent, zero-mean value, real-valued random variables on a probability space $(\Omega, {\mathcal A}, \bP)$ with distributions $\mu_k$ and $\nu_k$. Assume that there exists $c > 0$ such that
 \begin{equation*}
\left| \int_{-\infty}^{+\infty} e^{\gamma x} d\mu_k(x) \right| \leq e^{c \gamma^2} \text{ for  all } \gamma \in \bR \text{ and for all } k \in \bZ^3
 \end{equation*}
 and similarly for $\nu_k$. Let $f^\omega = (f_1^\omega, f_2^\omega)$ be the associated randomized initial data as defined in \eqref{equ:bighsrandomization} and let $u_f^\omega$ be the associated free evolution as defined in \eqref{eq:free_evolution}. For almost every $\omega \in \Omega$ there exists a unique global solution
 \begin{equation*}
  (u, u_t) \in (u_f^\omega, \partial_t u_f^\omega) + C\bigl(\bR; H^1_x(\bR^3) \times L^2_x(\bR^3)\bigr)
 \end{equation*}
 to the nonlinear wave equation
 \begin{equation} \label{equ:nlw_rho_global}
  \left\{ \begin{split}
   -u_{tt} + \Delta u &= |u|^{\rho-1} u \text{ on } \bR \times \bR^3, \\
   (u, u_t)|_{t=0} &= (f_1^\omega, f_2^\omega).
  \end{split} \right.
 \end{equation}
 Here, uniqueness only holds in a mild sense; see Remark \ref{remark:uniqueness}.
\end{theorem}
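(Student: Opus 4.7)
The approach will be to split $u = u_f^\omega + v$, so that the nonlinear remainder $v$ solves a wave equation with zero Cauchy data and forcing $|u_f^\omega+v|^{\rho-1}(u_f^\omega+v)$, and to construct $v$ in the energy space $C(\bR; H^1_x(\bR^3) \times L^2_x(\bR^3))$. The first step is to derive \emph{probabilistic Strichartz estimates} for the free evolution $u_f^\omega$. Writing
\begin{equation*}
 u_f^\omega = \sum_{k \in \bZ^3} h_k(\omega) \cos(t|\nabla|) P_k f_1 + \sum_{k \in \bZ^3} l_k(\omega) \frac{\sin(t|\nabla|)}{|\nabla|} P_k f_2
\end{equation*}
and combining Khintchine's inequality (available thanks to the sub-Gaussian assumption \eqref{eq:rvassumption}) with the unit-scale Bernstein inequality \eqref{equ:unit_scale_intro}, one obtains tail bounds of the form
\begin{equation*}
 \bP\Bigl( \|u_f^\omega\|_{L^q_t([0,T]; L^r_x(\bR^3))} > \lambda \Bigr) \leq C \exp\Bigl( -c \lambda^2 / (T^{2/q} \|f\|_{H^s_x \times H^{s-1}_x}^2) \Bigr)
\end{equation*}
for a range of exponents $(q,r)$ strictly outside the deterministic Strichartz range at regularity $s < 1$. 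Crucially, the unit-scale localization costs no power of the frequency scale in Bernstein, so the probabilistic gain in integrability is genuine and drives the entire argument.

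With these estimates in hand, almost sure local well-posedness for $v$ in $H^1_x \times L^2_x$ follows from a contraction argument on the Duhamel formula
\begin{equation*}
 v(t) = -\int_0^t \frac{\sin((t-t')|\nabla|)}{|\nabla|}\bigl(|u_f^\omega+v|^{\rho-1}(u_f^\omega+v)\bigr)(t') \, \ud t',
\end{equation*}
after expanding the nonlinearity, handling the terms involving $v$ via deterministic energy and Strichartz estimates, and absorbing the mixed terms with powers of $u_f^\omega$ via the probabilistic space-time bounds above. This yields, off a small exceptional set, a unique $v$ on a random local interval, with the mild uniqueness of Remark \ref{remark:uniqueness} built into the fixed-point setup.

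To globalize, I would adapt Bourgain's high-low frequency decomposition. Given a target time $T$, choose a frequency threshold $N = N(T) \to \infty$, split $f = f_{\leq N} + f_{>N}$ via the projectors $P_k$, and randomize only the high-frequency remainder. The low-frequency piece $f_{\leq N}$ lies in $H^1_x \times L^2_x$ with norm growing polynomially in $N$; by the Ginibre--Velo / Shatah--Struwe global theory for the defocusing subquintic NLW in the energy class, it launches a global energy solution $u_{\leq N}$ whose Strichartz norms are polynomially controlled by its energy. Writing $u = u_{\leq N} + u_f^{\omega,>N} + w$, the remainder $w$ with zero data satisfies a perturbed equation whose nonlinearity decomposes into monomials in $u_{\leq N}$, $u_f^{\omega,>N}$, and $w$. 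A Gronwall-type energy estimate on $w$ that combines the probabilistic smallness of $u_f^{\omega,>N}$ with the controlled Strichartz norms of $u_{\leq N}$ should close on $[0,T]$, and a Borel--Cantelli argument along an exhaustion $T_n \to \infty$ with summable exceptional probabilities then upgrades this to almost sure global existence.

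The main obstacle is the quantitative balance in this energy estimate: the numerical threshold $s > (\rho^3+5\rho^2-11\rho-3)/(9\rho^2-6\rho-3)$ must arise as the balance point between three competing scales, namely (i) the polynomial blow-up in $N$ of $\|f_{\leq N}\|_{H^1_x \times L^2_x}$, (ii) the probabilistic smallness in $N$ of $\|u_f^{\omega,>N}\|_{L^q_t L^r_x}$ from the improved averaging estimates, and (iii) the polynomial-in-$T$ growth allowed by the Gronwall iteration, itself coupled to the polynomial dependence of the Strichartz norms of $u_{\leq N}$ on its energy as $\rho$ approaches the energy-critical exponent $5$. Carrying out this trade-off over the full range $3 \leq \rho < 5$, and in particular tracking the dependence of the subcritical global theory on the energy in the regime $\rho \uparrow 5$, is the principal technical task and is what pins down the admissible exponent range.
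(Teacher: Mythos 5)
Your overall frame (unit-scale randomization, improved probabilistic Strichartz bounds, Bourgain high-low decomposition) matches the paper, but the globalization step as you describe it has a genuine gap. You propose to fix the low-frequency evolution $u_{\leq N}$ globally on $[0,T]$, write $u = u_{\leq N} + u_{f,>N}^{\omega} + w$, and close a single ``Gronwall-type energy estimate'' for $w$ on all of $[0,T]$. This does not close. The equation for $w$ contains the cross term $|u_{\leq N}|^{\rho-1}w$, so any energy/Strichartz estimate for $w$ either produces a Gronwall factor of the form $\exp\bigl(C\int_0^T \|u_{\leq N}\|_{L^\infty_x}^{\rho-1}\,dt\bigr)$, or forces you to subdivide $[0,T]$ into intervals of length $T_1 \sim (KN^{1-s})^{-(\rho-1)/\alpha(\rho)}$ on each of which the norm of $w$ can grow by a multiplicative constant; either way the growth of $w$ is exponential in a positive power of $N$ (and of $T$), and the polynomial gain $N^{-(s-2\varepsilon)}$ from the averaging effects in \eqref{equ:D_K,N} cannot beat it. The mechanism that actually makes the high-low method work — and the one the paper uses — is \emph{additive}, not multiplicative, bookkeeping: on each interval of length $T_1$ the nonlinear remainder $\widetilde{w}^{(j)}$, produced by the probabilistic local result (Lemma \ref{lemma:probabilistic_lwp}), is fed back into the energy-class evolution as in \eqref{equ:nlw_for_v_second_iteration}, and it is the \emph{conserved defocusing energy} of that evolution which controls the accumulation; only the per-step energy increment \eqref{equ:energy_increment} adds up over the $\sim T/T_1$ steps, and requiring the total increment to stay below the initial energy is exactly the condition \eqref{equ:condition} that produces the threshold $s > \frac{\rho^3+5\rho^2-11\rho-3}{9\rho^2-6\rho-3}$. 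Without this redistribution step your scheme cannot reach the stated exponent (or indeed any fixed $s<1$ for large $T$).

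Two further points. First, your scheme leans on global-in-time Strichartz bounds for the subquintic defocusing equation that are polynomial in the energy uniformly as $\rho \uparrow 5$; this is neither available off the shelf in the form you need nor necessary — the paper only uses Strichartz on the short intervals $[jT_1,(j+1)T_1]$, where the Duhamel term is absorbed by choosing $T_1$ small relative to $KN^{1-s}$, as in \eqref{equ:strichartz_bounds_on_v}. Second, two smaller mismatches: the theorem randomizes the \emph{full} data, so you should split the randomized data $f^{\omega} = f^{\omega}_{\leq N} + f^{\omega}_{>N}$ (controlling the low-frequency piece on the sets \eqref{equ:A_K,N}, \eqref{equ:B_K,N}) rather than ``randomize only the high-frequency remainder''; and the passage to almost sure global existence is not a Borel--Cantelli argument with summable exceptional probabilities in $T$, but a union over the threshold $K$ (giving $\bP(\Omega_T)=1$ for each fixed $T$) followed by a countable intersection over $T_j = j$.
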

The proof of Theorem \ref{thm:rho} combines a probabilistic local existence argument with Bourgain's high-low frequency decomposition~\cite{B98}, an approach introduced by Colliander and Oh \cite[Theorem 2]{CO} to show almost sure global existence of the one-dimensional periodic defocusing cubic nonlinear Schr\"odinger equation below $L^2(\bT)$. The high-low method was previously used by Kenig, Ponce and Vega \cite[Theorem 1.2]{KPV} to prove global well-posedness of \eqref{equ:ivp} for $2 \leq \rho < 5$ for initial data in a range of sub-critical spaces below the energy space (see also \cite{BC}, \cite{GP} and \cite{Roy} for $\rho = 3$). We adopt arguments from \cite{KPV}. 

\medskip

Theorem~\ref{thm:rho} permits initial data at lower regularities than the deterministic result \cite[Theorem 1.2]{KPV}. This is mainly due to so-called averaging effects for the free evolution of randomized initial data (see Lemma \ref{lemma:averaging_effects_rho} below), which are proven by combining the unit-scale Bernstein estimate \eqref{equ:unit_scale_intro} and Strichartz estimates for the wave equation on $\bR^3$. In particular, we use here that the randomization is performed directly on Euclidean space.

\begin{remark}
For $3 \leq \rho < 4$, random data Cauchy theory for the nonlinear wave equation \eqref{equ:ivp} on $\bR^3$ has been addressed by Suzzoni \cite{Suzzoni1, Suzzoni2} using different approaches than in the proof of Theorem~\ref{thm:rho}. In \cite[Theorem 2]{Suzzoni1}, using methods from \cite{BT1, BT2}, almost sure global existence and scattering for \eqref{equ:ivp} for $3 \leq \rho < 4$ is established for radially symmetric initial data in a class of spaces of super-critical regularity related to the Penrose transform. For $\rho =3$, almost sure global existence and scattering for \eqref{equ:ivp} is proven in \cite[Theorem 1.2]{Suzzoni2}, using methods from \cite{BT4}. In both these cases, the spaces for the initial data do not coincide with $H^s_x(\bR^3) \times H^{s-1}_x(\bR^3)$. We do not address the question of scattering of the constructed solutions in Theorem 1.1. The main difficulty is that the high-low method does not yield bounds on any \emph{global} space-time norm of the nonlinear component of the solution $u$.
\end{remark}

\begin{remark}
During the final revision of this article, the preprints \cite{BOP2, BOP1} by B\'enyi-Oh-Pocovnicu and \cite{Pocovnicu} by Pocovnicu appeared which use a similar randomization procedure. In \cite{BOP2, BOP1} almost sure well-posedness results are established for the cubic nonlinear Schr\"odinger equation on $\mathbb{R}^d$ for $d \geq 3$. In \cite{Pocovnicu} almost sure global well-posedness is proven for the energy-critical defocusing nonlinear wave equation on $\mathbb{R}^d$ for $d = 4,5$.
\end{remark}

\begin{remark}
\label{rem:supercrit}
For $\frac{1}{4}(7+\sqrt{73}) \approx 3.89 < \rho < 5$, the range of allowable regularity exponents~$s$ in Theorem \ref{thm:rho} contains super-critical exponents; see Figure \ref{fig:plot}. To the best of the authors' knowledge, for $4 \leq \rho < 5$, Theorem~\ref{thm:rho} is the first result which establishes the existence of large sets of initial data of super-critical regularity which lead to global solutions to \eqref{equ:ivp}.
\end{remark}

\begin{figure}[hbt] 
 \centering
 \includegraphics[height=6cm,keepaspectratio]{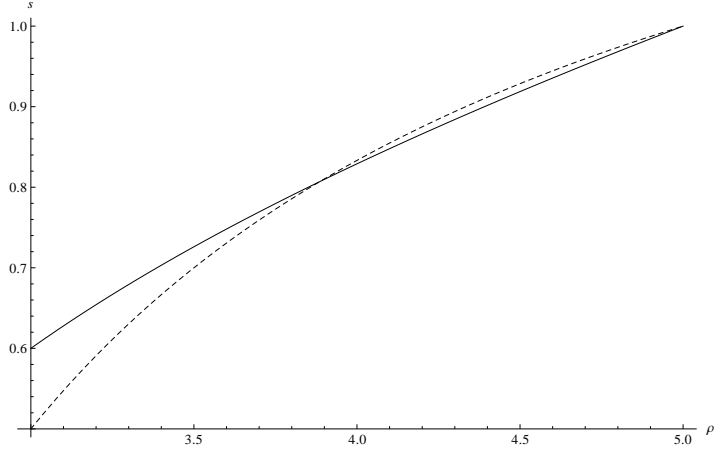}
 \label{fig:plot}
 \caption{The dashed line is the critical regularity $s_c = \frac{3}{2} - \frac{2}{\rho-1}$. The solid line is the threshold for the exponent $s$ in Theorem~\ref{thm:rho}.}
\end{figure} 

\begin{remark}
For the specific case $\rho = 3$, we note that our randomization procedure allows us to straightforwardly adapt the proofs in \cite{BT4} to the Euclidean setting. In particular, by modifying the proof of the averaging effects of Corollary A.4 of \cite{BT4}, we can use the arguments of Proposition 2.1 and Proposition 2.2 of \cite{BT4} to prove that if $0 < s < 1$ and $f = (f_1, f_2) \in H^s_x(\bR^3)\times H^{s-1}_x(\bR^3)$, then for almost every $\omega \in \Omega$ there exists a unique global solution
 \begin{equation*}
  (u, u_t) \in (u_f^\omega, \partial_t u_f^\omega) + C\bigl(\bR; H^1_x(\bR^3) \times L^2_x(\bR^3)\bigr)
 \end{equation*}
to the defocusing cubic nonlinear wave equation on $\bR^3$ with initial data $f^\omega$. This is an improvement over the corresponding range $\frac{3}{5} < s < 1$ in Theorem~\ref{thm:rho}. In contrast, these arguments which proceed via energy estimates cannot be applied directly in the case $3 < \rho < 5$. In such an argument, one would need bounds on $\| |u|^{\rho-1} u \|_{L_t^1 L_x^2(\bR \times \bR^3)} = \| u \|^{\rho}_{L_t^\rho L_x^{2\rho}(\bR \times \bR^3)}$ but one no longer has the necessary Sobolev embedding to close the argument.
\end{remark}

%\begin{remark}
%For the specific case $\rho = 3$, we note that our randomization procedure allows us to straightforwardly adapt the proofs of Proposition 2.1, Proposition 2.2 {\color{red}and Corollary A.4} in \cite{BT4} to the Euclidean setting. Namely, we are able to show that if $0 < s < 1$ and $f = (f_1, f_2) \in H^s_x(\bR^3)\times H^{s-1}_x(\bR^3)$, then for almost every $\omega \in \Omega$ there exists a unique global solution
% \begin{equation*}
%  (u, u_t) \in (u_f^\omega, \partial_t u_f^\omega) + C\bigl(\bR; H^1_x(\bR^3) \times L^2_x(\bR^3)\bigr)
% \end{equation*}
%to the defocusing cubic nonlinear wave equation on $\bR^3$ with initial data $f^\omega$. This is an improvement over the corresponding range $\frac{3}{5} < s < 1$ in Theorem~\ref{thm:rho}. In contrast, the argument of \cite{BT4} which proceeds via energy estimates cannot be applied directly in the case $3 < \rho < 5$. In such an argument, one would need bounds on $\| |u|^{\rho-1} u \|_{L_t^1 L_x^2(\bR \times \bR^3)} = \| u \|^{\rho}_{L_t^\rho L_x^{2\rho}(\bR \times \bR^3)}$ but one no longer has the necessary Sobolev embedding to close the argument.
%\end{remark}

When $\rho = 5$, the nonlinear wave equation \eqref{equ:ivp} is energy-critical and the high-low argument no longer works. However, by establishing probabilistic a priori estimates on the $L^5_t L^{10}_x(\bR \times \bR^3)$ norm of the free evolution $u_f^\omega$, we obtain the following  probabilistic small data global existence result.
%However, a similar argument to the one used to prove the key averaging effects for Theorem \ref{thm:rho} (see Lemma \ref{lemma:averaging_effects_rho} below) yields bounds on the $L^5_t L^{10}_x(\bR \times \bR^3)$ norm of the free evolution~$u_f^\omega$. Together with straightforward contraction and bootstrap arguments, we obtain the following theorem.

\begin{theorem} \label{thm:quintic}
 Let $\frac{2}{5} \leq s < 1$ and $f = (f_1, f_2) \in H^s_x(\bR^3) \times \dot{H}^{s-1}_x(\bR^3)$. Let $\{ (h_k, l_k) \}_{k \in \bZ^3}$  be a sequence of independent, zero-mean value, real-valued random variables on a probability space $(\Omega, {\mathcal A}, \bP)$ with distributions $\mu_k$ and $\nu_k$. Assume that there exists $c > 0$ such that
 \begin{equation*}
  \left| \int_{-\infty}^{+\infty} e^{\gamma x} d\mu_k(x) \right| \leq e^{c \gamma^2} \text{ for  all } \gamma \in \bR \text{ and for all } k \in \bZ^3
 \end{equation*}
 and similarly for $\nu_k$. Let $f^\omega = (f_1^\omega, f_2^\omega)$ be the associated randomized initial data as defined in \eqref{equ:bighsrandomization} and let $u_f^\omega$ be the associated free evolution as defined in \eqref{eq:free_evolution}. There exists $\Omega_f \subset \Omega$ with 
 \begin{equation} \label{equ:probabilistic_estimate_quintic_thm}
  \bP( \Omega_f ) \geq 1 - C e^{-c/\|f\|^2_{H^s_x \times \dot{H}^{s-1}_x}}  
 \end{equation}
 for some absolute constants $C, c > 0$, such that for every $\omega \in \Omega_f$ there exists a unique global solution 
 \begin{equation*}
  (u, u_t) \in (u_f^\omega, \partial_t u_f^\omega) + C\bigl(\bR; \dot{H}^1_x(\bR^3) \times L^2_x(\bR^3)\bigr)
 \end{equation*}
 to the quintic nonlinear wave equation
 \begin{equation} \label{equ:quintic_nlw}
  \left\{ \begin{split}
   -u_{tt} + \Delta u &= |u|^4 u \text{ on } \bR \times \bR^3, \\
   (u, u_t)|_{t = 0} &= (f_1^\omega, f_2^\omega).
  \end{split} \right.
 \end{equation}
 Moreover, we have scattering in the sense that for every $\omega \in \Omega_f$ there exist $(v_1, v_2) \in \dot{H}^1_x(\bR^3) \times L^2_x(\bR^3)$ such that the free evolution $v(t) = \cos(t |\nabla|) v_1 + \frac{\sin(t |\nabla|)}{|\nabla|} v_2$ satisfies
 \begin{equation*}
  \bigl\| \bigl( u(t) - u_f^\omega(t) - v(t), \partial_t u(t) - \partial_t u_f^\omega(t) - \partial_t v(t) \bigr) \bigr\|_{\dot{H}^1_x(\bR^3) \times L^2_x(\bR^3)} \longrightarrow 0 \text{ as } t \rightarrow \pm \infty.
 \end{equation*}
\end{theorem}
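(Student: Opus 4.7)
The plan is to write $u = u_f^\omega + v$ and solve for the nonlinear remainder $v$ by a small-data fixed point argument in the scaling-critical space $L^5_t L^{10}_x(\bR \times \bR^3)$. The key is to show that $\|u_f^\omega\|_{L^5_t L^{10}_x}$ is small with the required probability; once this is in hand, the standard quintic Strichartz theory closes the contraction and automatically yields $v \in C(\bR; \dot{H}^1_x) \cap C^1(\bR; L^2_x)$ via Duhamel, while scattering follows from the global finiteness of the Duhamel integrand in $L^1_t L^2_x$.

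The central technical step is a probabilistic Strichartz estimate: for all $q \geq 10$,
\begin{equation*}
 \bigl\| \|u_f^\omega\|_{L^5_t L^{10}_x(\bR \times \bR^3)} \bigr\|_{L^q(\Omega)} \lesssim \sqrt{q}\, \bigl( \|f_1\|_{H^s_x} + \|f_2\|_{\dot{H}^{s-1}_x} \bigr).
\end{equation*}
First I would exchange $L^q_\omega$ past $L^5_t L^{10}_x$ via Minkowski's integral inequality (valid since $q \geq 10$), and then invoke the subgaussian moment assumption \eqref{eq:rvassumption} to dominate the random variable pointwise in $(t, x)$ by $\sqrt{q}$ times the square function
\begin{equation*}
 F(t, x) = \biggl( \sum_{k \in \bZ^3} \bigl| P_k \cos(t |\nabla|) f_1 \bigr|^2 + \Bigl| P_k \tfrac{\sin(t |\nabla|)}{|\nabla|} f_2 \Bigr|^2 \biggr)^{1/2}.
\end{equation*}
Pulling the $\ell^2_k$ sum outside of the $L^5_t L^{10}_x$ norm by Minkowski (since $5, 10 \geq 2$), the essential gain is the unit-scale Bernstein inequality \eqref{equ:unit_scale_intro}, which replaces each $\|P_k \cos(t |\nabla|) f_1\|_{L^5_t L^{10}_x}$ by $\|P_k \cos(t |\nabla|) f_1\|_{L^5_t L^{10/3}_x}$ at bounded cost per $k$. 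The pair $(5, 10/3)$ is wave-admissible on $\bR^3$ at regularity $\gamma = 2/5$, so Strichartz gives bounds in terms of $\|P_k f_1\|_{\dot{H}^{2/5}_x}$ and $\|P_k f_2\|_{\dot{H}^{-3/5}_x}$; since $s \geq 2/5$ and $s-1 \geq -3/5$, reassembly via \eqref{eq:norm_equiv} produces the claimed bound. Chebyshev followed by optimization in $q$ upgrades this to the Gaussian-tail estimate \eqref{equ:probabilistic_estimate_quintic_thm}.

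With $\Omega_f := \{ \omega : \|u_f^\omega\|_{L^5_t L^{10}_x} \leq \eta_0 \}$ for a small absolute constant $\eta_0 > 0$, I run the classical small-data contraction for the quintic wave equation: the map
\begin{equation*}
 v \mapsto -\int_0^t \frac{\sin((t-s)|\nabla|)}{|\nabla|} \bigl( |u_f^\omega + v|^4 (u_f^\omega + v) \bigr)(s) \, ds
\end{equation*}
sends the ball $\{ v : \|v\|_{L^5_t L^{10}_x} \leq 2\eta_0 \}$ to itself and is a contraction, using the pointwise inequality $\bigl| |z|^4 z - |w|^4 w \bigr| \lesssim (|z| + |w|)^4 |z - w|$, the identity $\|g^5\|_{L^1_t L^2_x} = \|g\|^5_{L^5_t L^{10}_x}$, and the Strichartz inequality. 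The same Strichartz bound simultaneously places $(v, \partial_t v) \in C(\bR; \dot{H}^1_x \times L^2_x)$ and yields uniqueness in this class. Scattering follows from the standard argument: the global finiteness of $\|u\|_{L^5_t L^{10}_x}$ places $|u|^4 u$ in $L^1_t L^2_x(\bR \times \bR^3)$, so the Duhamel tails from $t$ to $\pm \infty$ converge in $\dot{H}^1_x \times L^2_x$ and define the scattering states $(v_1, v_2)$.

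The main obstacle I anticipate is the probabilistic Strichartz estimate at the sharp threshold $s = 2/5$, which forces the use of the boundary admissible pair $(5, 10/3)$ and requires careful treatment of the low-frequency piece ($|k| \lesssim 1$), where the equivalence between the summed homogeneous norms at regularity $-3/5$ and the $\dot{H}^{s-1}_x$ norm of $f_2$ must be verified separately from the high-frequency sum.
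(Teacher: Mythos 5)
Your overall architecture is exactly the paper's: a probabilistic Strichartz bound on $\|u_f^\omega\|_{L^5_t L^{10}_x(\bR\times\bR^3)}$ obtained by Minkowski, the large deviation estimate \eqref{equ:large_deviation_estimate}, the unit-scale Bernstein estimate \eqref{equ:bernstein_estimate} and a wave Strichartz estimate, followed by the Gaussian tail bound, a small-data contraction for the difference equation \eqref{equ:quintic_proof_difference_equation} in $L^5_t L^{10}_x$, and scattering from $\||u_f^\omega+w|^5\|_{L^1_t L^2_x}<\infty$. However, there is a genuine gap in your key estimate: you fix the single admissible pair $(5,10/3)$ at regularity $\gamma=2/5$ for all $s\in[\tfrac25,1)$, so Strichartz hands you $\|P_k f_1\|_{\dot{H}^{2/5}_x}$ and $\|P_k f_2\|_{\dot{H}^{-3/5}_x}$, and you then need
\begin{equation*}
\sum_{k\in\bZ^3}\|P_k f_2\|^2_{\dot{H}^{-3/5}_x(\bR^3)}\ \lesssim\ \|f_2\|^2_{\dot{H}^{s-1}_x(\bR^3)}.
\end{equation*}
For the modes with $|k|\lesssim 1$ this is false whenever $s>\tfrac25$: on $|\xi|\le 1$ the weight $|\xi|^{-6/5}$ is more singular than $|\xi|^{2(s-1)}$, and one can choose $\hat f_2$ concentrated near $\xi=0$ (e.g. $|\hat f_2(\xi)|^2=|\xi|^{-1-2s+\varepsilon}$ with $0<\varepsilon<2s-\tfrac45$) so that $P_0 f_2\in\dot{H}^{s-1}_x$ but $P_0 f_2\notin\dot{H}^{-3/5}_x$. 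So this is not merely a verification to be carried out "separately for the low-frequency sum," as you suggest in your closing paragraph; the inequality you would need simply does not hold, and the step fails for every $s>\tfrac25$ (it is fine only at the endpoint $s=\tfrac25$, and the $f_1$ part is fine for all $s$ since $\dot{H}^{2/5}_x$ is dominated by $H^s_x$).

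The repair is what the paper does: since $\tfrac25\le s<1$, choose the $s$-dependent pair $(5,r)$ with $\tfrac15+\tfrac3r=\tfrac32-s$, i.e. $\tfrac{10}{3}\le r<10$, which is wave-admissible and Strichartz-admissible at regularity $\gamma=s$. Then unit-scale Bernstein takes $L^{10}_x$ to $L^{r}_x$ (legitimate since $r\le 10$), and Strichartz at regularity $s$ directly produces $\|P_k f_1\|_{\dot{H}^{s}_x}$ and $\|P_k f_2\|_{\dot{H}^{s-1}_x}$; these square-sum to $\|f_1\|_{H^s_x}+\|f_2\|_{\dot{H}^{s-1}_x}$ by the finite overlap of the $\psi_k$ (the homogeneous analogue of \eqref{eq:norm_equiv}, valid at negative regularity by Plancherel), with no low-frequency anomaly. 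With that single change your estimate closes, and the remainder of your argument (Chebyshev plus optimization in $q$, the contraction on a small ball, persistence in $C(\bR;\dot{H}^1_x\times L^2_x)$, and the scattering tail argument) matches the paper's proof.
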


\begin{remark}
We note that the statement is only meaningful if
\begin{equation*}
\|f\|_{H^s_x \times \dot{H}^{s-1}_x} \leq \left( \frac{c}{\log(C)} \right)^{1/2},
\end{equation*}
which reflects that Theorem~\ref{thm:quintic} is a small data result.
\end{remark}

\noindent {\it Notation:} We let $C > 0$ denote a constant that depends only on fixed parameters and whose value may change from line to line. We write $X \lesssim Y$ to denote $X \leq C Y$ for some $C > 0$. As mentioned above, $H^s_x(\bR^3)$ denotes the usual inhomogeneous Sobolev space, while $\dot{H}^s_x(\bR^3)$ denotes the corresponding homogeneous space. Occasionally we will use the notation $\cH^s(\bR^3):= H^s_x(\bR^3) \times H^{s-1}_x(\bR^3)$, where $\cH^s(\bR^3)$ is endowed with the obvious norm.

\medskip

\noindent {\it Organization of the paper:} In Section \ref{preliminaries}, we present deterministic and probabilistic lemmata that will be used in the proof of Theorem \ref{thm:rho}. In Section \ref{sec:avg}, we prove certain probabilistic a priori estimates on the randomized initial data and the aforementioned averaging effects for the free evolution of the randomized initial data. Finally, in Section \ref{sec:nlw_existence}, we present the proof of Theorem \ref{thm:rho}, and in Section \ref{sec:quintic}, the proof of Theorem \ref{thm:quintic}.

\medskip

\noindent {\it Acknowledgments:} The authors would like to sincerely thank Gigliola Staffilani for all of her help. They are grateful to Michael Eichmair for his encouragement and support and to Andrea Nahmod for stimulating discussions. The first author wishes to thank his advisor Michael Struwe for his support and guidance.

\section{Deterministic and probabilistic preliminaries} \label{preliminaries}
\setcounter{equation}{0}

\subsection{Deterministic preliminaries}

The following unit-scale Bernstein estimate is a crucial ingredient in the proofs of the probabilistic a priori estimates on the randomized initial data in Section \ref{sec:avg}. Its advantage compared to the ordinary Bernstein estimate for the dyadic Littlewood-Paley decomposition is that there is no derivative loss.

\begin{lemma}[``Unit-scale Bernstein estimate'']
\label{lem:unit_bernstein}
 Let $2 \leq p_1 \leq p_2 \leq \infty$. There exists a constant $C \equiv C(p_1, p_2) > 0$ such that for all $f \in L^2_x(\bR^3)$ and for all $k \in \bZ^3$
 \begin{equation} \label{equ:bernstein_estimate}
  \|P_k f\|_{L^{p_2}_x(\bR^3)} \leq C \|P_k f\|_{L^{p_1}_x(\bR^3)}.
 \end{equation}
\end{lemma}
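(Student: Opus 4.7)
The plan is to exploit that the Fourier multiplier $\psi_k$ is supported on a ball of \emph{unit} radius, so that the convolution kernel obtained from a fattened cutoff is a fixed Schwartz function (up to modulation), independent of $k$. Concretely, fix a smooth function $\tilde{\psi}_0 \in C_c^\infty(\bR^3)$ with $\tilde{\psi}_0 \equiv 1$ on $\{|\xi|\leq 2\}$ and $\mathrm{supp}(\tilde{\psi}_0) \subset \{|\xi|\leq 3\}$, and set $\tilde{\psi}_k(\xi) := \tilde{\psi}_0(\xi-k)$. Since $\mathrm{supp}(\psi_k)\subset\{|\xi-k|\leq 2\}$, we have $\tilde{\psi}_k \psi_k = \psi_k$, hence $P_k f = \tilde{P}_k P_k f$, where $\tilde{P}_k$ is the Fourier multiplier with symbol $\tilde{\psi}_k$. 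Passing to physical space, this means
\begin{equation*}
 P_k f = K_k \ast P_k f, \qquad K_k := \cF^{-1}\tilde{\psi}_k.
\end{equation*}

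The change of variables $\xi \mapsto \xi - k$ in the inverse Fourier transform gives $K_k(x) = e^{ik\cdot x}(\cF^{-1}\tilde{\psi}_0)(x)$, so $|K_k(x)| = |(\cF^{-1}\tilde{\psi}_0)(x)|$ is independent of $k$. Because $\tilde{\psi}_0$ is smooth and compactly supported, $\cF^{-1}\tilde{\psi}_0$ is a Schwartz function and in particular lies in $L^r_x(\bR^3)$ for every $r \in [1,\infty]$, with a norm depending only on $\tilde{\psi}_0$.

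Now apply Young's convolution inequality. Choose $r$ by $\tfrac{1}{r} = 1 - \tfrac{1}{p_1} + \tfrac{1}{p_2}$. The hypothesis $p_1 \leq p_2$ ensures $r \geq 1$, and $p_1,p_2 \geq 2$ gives $r \leq \infty$, so $r$ is admissible. Then
\begin{equation*}
 \|P_k f\|_{L^{p_2}_x(\bR^3)} = \|K_k \ast P_k f\|_{L^{p_2}_x(\bR^3)} \leq \|K_k\|_{L^r_x(\bR^3)} \|P_k f\|_{L^{p_1}_x(\bR^3)} = \|\cF^{-1}\tilde{\psi}_0\|_{L^r_x(\bR^3)} \|P_k f\|_{L^{p_1}_x(\bR^3)},
\end{equation*}
and the constant $C(p_1,p_2) := \|\cF^{-1}\tilde{\psi}_0\|_{L^r_x(\bR^3)}$ is finite and independent of $k$, as required.

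There is no serious obstacle; the only subtlety worth emphasizing is why the constant is $k$-independent, which is precisely the modulation identity $|K_k| = |\cF^{-1}\tilde{\psi}_0|$ arising from the translation $\tilde{\psi}_k(\xi) = \tilde{\psi}_0(\xi-k)$. This is exactly where the \emph{unit} scale of the decomposition is used: in the analogous statement for dyadic Littlewood--Paley projections at frequency scale $N$, rescaling would produce a factor $N^{3(1/p_1 - 1/p_2)}$, i.e., the usual Bernstein derivative loss, which is absent here.
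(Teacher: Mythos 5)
Your proof is correct and is essentially the same as the paper's: both multiply by a fattened cutoff equal to $1$ on $\{|\xi-k|\leq 2\}$ (your $\tilde{\psi}_k$ is the paper's $\eta_k$), apply Young's inequality with $1+\tfrac{1}{p_2}=\tfrac{1}{r}+\tfrac{1}{p_1}$, and use translation/modulation invariance to see that the kernel's $L^r_x$ norm is independent of $k$. Your extra remark contrasting this with the dyadic Bernstein loss is accurate but not needed for the proof.
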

\begin{proof}
 Let $\eta \in C_c^{\infty}(\bR^3)$ be such that $0 \leq \eta \leq 1$ with $\eta(\xi) = 1$ for $|\xi| \leq 2$ and $\eta(\xi) = 0$ for $|\xi| \geq 3$. For $k \in \bZ^3$ define 
 \begin{equation*}
  \eta_k(\xi) = \eta(\xi - k) \text{ for } \xi \in \bR^3.
 \end{equation*}
 From Young's inequality with $1 + \frac{1}{p_2} = \frac{1}{q} + \frac{1}{p_1}$ we then obtain
 \begin{equation*}
  \begin{split}
  \|P_k f\|_{L^{p_2}_x(\bR^3)} &= \|\cF^{-1} ( \psi_k \hat{f} )\|_{L^{p_2}_x(\bR^3)} = \|\cF^{-1} ( \eta_k \psi_k \hat{f} )\|_{L^{p_2}_x(\bR^3)} =  \|\check{\eta}_k  \ast (P_k f)\|_{L^{p_2}_x(\bR^3)} \\
  &\leq \|\check{\eta}_k\|_{L^q_x(\bR^3)} \|P_k f\|_{L^{p_1}_x(\bR^3)} = \|\check{\eta}\|_{L^q_x(\bR^3)} \|P_k f\|_{L^{p_1}_x(\bR^3)}. \qedhere
  \end{split} 
 \end{equation*}
\end{proof}

In the next sections we will repeatedly make use of Strichartz estimates for the wave equation on $\bR^3$.

\begin{definition}
 We call an exponent pair $(q,r)$ wave-admissible if $2 \leq q \leq \infty$, $2 \leq r < \infty$ and
 \begin{equation}
 \label{equ:wave}
  \frac{1}{q} + \frac{1}{r} \leq \frac{1}{2}.
 \end{equation}
\end{definition}

\begin{proposition}[\protect{Strichartz estimates in three space dimensions, \cite{Strichartz}, \cite{Pecher}, \cite{GV2}, \cite{KeelTao}}] \label{prop:strichartz_estimates}
 Suppose $(q, r)$ and $(\tilde{q}, \tilde{r})$ are wave-admissible pairs. Let $u$ be a (weak) solution to the wave equation
 \begin{equation*}
  \left\{ \begin{split}
   -u_{tt} + \Delta u \, &= \, h \text{ on } [0, T] \times \bR^3, \\
   (u, u_t)|_{t=0} \, &= \, (f,g)
  \end{split} \right.
 \end{equation*}
 for some data $f, g, h$ and time $0 < T < \infty$. Then
 \begin{equation} \label{equ:strichartz_estimates}
  \begin{split}
   &\|u\|_{L^q_t L^r_x ([0, T] \times \bR^3)} + \|u\|_{L^{\infty}_t \dot{H}^{\gamma}_x([0,T] \times \bR^3)} + \|u_t\|_{L^{\infty}_t \dot{H}^{\gamma-1}_x([0,T] \times \bR^3)} \\
   &\qquad \qquad \lesssim \, \|f\|_{\dot{H}_x^{\gamma}(\bR^3)} + \|g\|_{\dot{H}_x^{\gamma-1}(\bR^3)} + \|h\|_{L_t^{\tilde{q}'} L^{\tilde{r}'}_x([0,T] \times \bR^3)}
  \end{split}
 \end{equation}
 under the assumption that the scaling conditions
 \begin{equation}
 \label{equ:strichartz1}
\qquad  \frac{1}{q} + \frac{3}{r} \, = \, \frac{3}{2} - \gamma
 \end{equation}
 and
 \begin{equation}
 \label{equ:strichartz2}
\frac{1}{\tilde{q}'} + \frac{3}{\tilde{r}'} - 2 \, = \, \frac{3}{2} - \gamma
 \end{equation} 
 hold.
\end{proposition}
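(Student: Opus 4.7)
The plan is to reduce the estimate to sharp homogeneous Strichartz estimates for the half-wave propagators $U_\pm(t) := e^{\pm it|\nabla|}$. By Duhamel's principle, the solution decomposes as
\begin{equation*}
u(t) = \cos(t|\nabla|) f + \frac{\sin(t|\nabla|)}{|\nabla|} g + \int_0^t \frac{\sin((t-s)|\nabla|)}{|\nabla|} h(s)\, ds,
\end{equation*}
and writing $\cos(t|\nabla|) = \tfrac{1}{2}(U_+(t) + U_-(t))$ (and analogously for $\sin$) reduces the problem to bounds for $U_\pm$ and a retarded integral operator built from them. The $L^\infty_t \dot H^\gamma_x$ control of $u$ and the $L^\infty_t \dot H^{\gamma-1}_x$ control of $u_t$ follow at once from Plancherel, so the substance lies in the mixed-norm bound.

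For that, I would first establish the frequency-localized dispersive estimate
\begin{equation*}
\|U_\pm(t) P_N \phi\|_{L^\infty_x(\bR^3)} \lesssim N^{2} |t|^{-1} \|P_N \phi\|_{L^{1}_x(\bR^3)},
\end{equation*}
where $P_N$ denotes a Littlewood--Paley projection onto frequencies of size $\sim N$. This is obtained by stationary phase applied to the oscillatory integral representation of the kernel, exploiting the non-vanishing Gaussian curvature of the unit sphere in $\bR^3$. Interpolating against the $L^2 \to L^2$ energy identity produces a one-parameter family of $L^{r'}_x \to L^r_x$ decay bounds. A $TT^*$ argument combined with the one-dimensional Hardy--Littlewood--Sobolev inequality in the time variable then promotes these into the frequency-localized Strichartz estimate
\begin{equation*}
\|U_\pm(t) P_N \phi\|_{L^q_t L^r_x(\bR \times \bR^3)} \lesssim N^{\gamma} \|P_N \phi\|_{L^2_x(\bR^3)}
\end{equation*}
for every wave-admissible pair $(q,r)$ with $q > 2$ satisfying equality in \eqref{equ:wave} together with the scaling relation \eqref{equ:strichartz1}. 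Pairs with strict inequality in \eqref{equ:wave} reduce to the sharp-line case by Sobolev embedding in the spatial variable. Summing dyadically in $N$ via the Littlewood--Paley square function then yields the full homogeneous estimate.

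For the inhomogeneous term, apply duality to the homogeneous bound to obtain the adjoint Strichartz estimate, compose the two to produce a full-line retarded estimate on $\bR \times \bR$, and then restrict the $s$-integral to $[0,t]$ via the Christ--Kiselev lemma. This last step is legitimate since $\tilde q' < q$ throughout the admissible range considered. The endpoint $(q,r)=(2,\infty)$ is excluded here by the requirement $r<\infty$ in the admissibility definition, so the full Keel--Tao bilinear machinery is not strictly needed for the stated range, although it gives a uniform framework. The principal analytic obstacle is the stationary-phase derivation of the dispersive bound; once that is in hand, the rest is the standard $TT^*$--Hardy--Littlewood--Sobolev--Christ--Kiselev package from the cited works.
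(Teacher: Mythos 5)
The paper does not actually prove this proposition---it is quoted as a known result with citations to Strichartz, Pecher, Ginibre--Velo and Keel--Tao---and your sketch reproduces precisely the standard argument of those references: frequency-localized dispersive estimate by stationary phase, interpolation and $TT^*$ with Hardy--Littlewood--Sobolev in time, Littlewood--Paley summation, Sobolev/Bernstein for non-sharp pairs, then duality and Christ--Kiselev (valid since every admissible pair here has $q>2>\tilde q'$, the $d=3$ endpoint $(2,\infty)$ being excluded by $r<\infty$). The one imprecision is that the $L^\infty_t \dot H^\gamma_x$ and $L^\infty_t \dot H^{\gamma-1}_x$ bounds for the Duhamel term do not follow ``at once from Plancherel'' but from the adjoint homogeneous estimate you derive anyway, applied on $[0,t]$ for each fixed $t$; with that noted, the proposal is correct and follows the same route as the cited works.
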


We say that a wave-admissible pair $(q,r)$ is Strichartz-admissible at regularity $\gamma$ if it satisfies \eqref{equ:strichartz1} for some $0 < \gamma < \frac{3}{2}$.

\subsection{Probabilistic preliminaries}

First, we recall a large deviation estimate from \cite{BT1} that will be used to obtain the probabilistic a priori estimates on the randomized initial data in Section \ref{sec:avg}.

\begin{proposition}[\protect{Large deviation estimate, \cite[Lemma 3.1]{BT1}}] \label{prop:large_deviation_estimate}
 Let $\{l_n\}_{n=1}^{\infty}$ be a sequence of real-valued independent random variables with associated distributions $\{\mu_n\}_{n=1}^{\infty}$ on a probability space $(\Omega, {\mathcal A}, \bP)$. Assume that the distributions satisfy the property that there exists $c > 0$ such that
 \begin{equation*}
  \left| \int_{-\infty}^{+\infty} e^{\gamma x} d\mu_n(x) \right| \leq e^{c \gamma^2} \text{ for  all } \gamma \in \bR \text{ and for all } n \in \mathbb{N}.
 \end{equation*}
 Then there exists $C > 0$ such that for every $p \geq 2$ and every $\{c_n\}_{n=1}^{\infty} \in \ell^2(\bN; \bC)$,
 \begin{equation} \label{equ:large_deviation_estimate}
  \Bigl\| \sum_{n=1}^{\infty} c_n l_n(\omega) \Bigr\|_{L^p_\omega(\Omega)} \leq C \sqrt{p} \Bigl( \sum_{n=1}^{\infty} |c_n|^2 \Bigr)^{1/2}.
 \end{equation}
\end{proposition}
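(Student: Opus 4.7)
The plan is to follow the standard Bernstein–Chernoff paradigm: use the exponential moment hypothesis and independence to control $\bE[e^{\gamma S_N}]$ where $S_N = \sum_{n \leq N} c_n l_n$, deduce a sub-Gaussian tail bound for $|S_N|$ via Markov's inequality, and then integrate the tail to extract the $L^p_\omega$ norm. A preliminary reduction splits $c_n = a_n + i b_n$ and uses the triangle inequality
\[
\Bigl\| \sum_n c_n l_n \Bigr\|_{L^p_\omega} \leq \Bigl\| \sum_n a_n l_n \Bigr\|_{L^p_\omega} + \Bigl\| \sum_n b_n l_n \Bigr\|_{L^p_\omega},
\]
together with $\sum_n a_n^2,\ \sum_n b_n^2 \leq \sum_n |c_n|^2$, to reduce to the case of real coefficients.

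Assume now $c_n \in \bR$ and set $\sigma^2 = \sum_n c_n^2$. Because the $l_n$ are independent and each satisfies the exponential moment bound, for every $\gamma \in \bR$
\[
\bE\bigl[e^{\gamma S_N}\bigr] = \prod_{n \leq N} \int_{-\infty}^{+\infty} e^{\gamma c_n x}\, d\mu_n(x) \leq \prod_{n \leq N} e^{c \gamma^2 c_n^2} \leq e^{c\gamma^2 \sigma^2}.
\]
For $\lambda > 0$, Markov's inequality gives $\bP(S_N > \lambda) \leq e^{-\gamma \lambda + c\gamma^2 \sigma^2}$, and optimizing in $\gamma > 0$ (choosing $\gamma = \lambda/(2c\sigma^2)$) yields $\bP(S_N > \lambda) \leq e^{-\lambda^2/(4c\sigma^2)}$. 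The identical argument applied to $-S_N$ (the hypothesis is symmetric in $\gamma$) produces
\[
\bP(|S_N| > \lambda) \leq 2\, e^{-\lambda^2/(4c\sigma^2)}.
\]

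With this sub-Gaussian tail in hand, the $L^p_\omega$ norm follows from the layer-cake representation:
\[
\|S_N\|_{L^p_\omega}^p = p \int_0^\infty \lambda^{p-1} \bP(|S_N| > \lambda) \, d\lambda \leq 2p \int_0^\infty \lambda^{p-1} e^{-\lambda^2/(4c\sigma^2)} \, d\lambda = p\,(4c\sigma^2)^{p/2}\,\Gamma(p/2),
\]
after the substitution $u = \lambda^2/(4c\sigma^2)$. Stirling's formula gives $\Gamma(p/2)^{1/p} \leq C\sqrt{p}$, so $\|S_N\|_{L^p_\omega} \leq C\sqrt{p}\,\sigma$ uniformly in $N$. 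Finally, since the partial sums $S_N$ form a Cauchy sequence in $L^2_\omega$ by orthogonality of $\{l_n\}$, the uniform Gaussian tail bound upgrades this to convergence in every $L^p_\omega$ with $p < \infty$, and passing $N \to \infty$ yields \eqref{equ:large_deviation_estimate}.

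There is no real obstacle in this proof; it is essentially a textbook Chernoff/Khintchine-type argument. The only points requiring any care are the symmetrization to the real case, tracking the constant through the optimization in $\gamma$, and controlling the $\Gamma(p/2)^{1/p}$ factor by $\sqrt{p}$ via Stirling so as to obtain exactly the $\sqrt{p}$ growth claimed in \eqref{equ:large_deviation_estimate}.
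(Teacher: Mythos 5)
Your proof is correct and is essentially the standard Chernoff/Khintchine argument of Burq--Tzvetkov, which the paper does not reproduce but simply cites as \cite[Lemma 3.1]{BT1}: exponential moment bound via independence, optimized Markov inequality for a sub-Gaussian tail, layer-cake integration with $\Gamma(p/2)^{1/p}\lesssim\sqrt{p}$, and a limiting argument in $N$ (note only that the mean-zero property needed for your ``orthogonality'' step is itself forced by the hypothesis $\int e^{\gamma x}\,d\mu_n(x)\leq e^{c\gamma^2}$ as $\gamma\to 0^{\pm}$). No gaps beyond that minor remark.
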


The next lemma will be used to estimate the probability of certain subsets of the probability space $\Omega$. Its proof is a straightforward adaption of the proof of Lemma 4.5 in \cite{Tz10}.

\begin{lemma} \label{lem:tzvetkov_probest}
Let $f \in \cH^s(\bR^3)$ and let $F$ be a real-valued measurable function on a probability space $(\Omega, {\mathcal A}, \bP)$. Suppose that there exist $C > 0$ and $r \geq 1$ such that for every $p\geq r$ we have
\begin{align*}
\|F\|_{L^p_{\omega}(\Omega)} \leq C \sqrt{p} \|f\|_{\cH^s}. 
\end{align*}
Then there exist $\delta > 0$ and $C_1 > 0$, depending on $C$ and $r$ but independent of $f$, such that for every $\lambda > 0$,
\begin{align*}
  \bP( \omega \in \Omega : |F(\omega)| > \lambda ) \leq C_1 e^{-\delta \lambda^{2}/\|f\|^2_{\cH^s}}.
\end{align*}
\end{lemma}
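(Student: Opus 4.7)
The proof is a standard Chebyshev-and-optimize argument. First, I would apply Chebyshev's inequality in the form
\begin{equation*}
\bP( |F(\omega)| > \lambda ) \leq \lambda^{-p} \|F\|_{L^p_\omega(\Omega)}^p
\end{equation*}
valid for any $p \geq 1$ and $\lambda > 0$. Combining this with the hypothesis that $\|F\|_{L^p_\omega} \leq C\sqrt{p}\, \|f\|_{\cH^s}$ whenever $p \geq r$, one obtains
\begin{equation*}
\bP( |F(\omega)| > \lambda ) \leq \left( \frac{C\sqrt{p}\, \|f\|_{\cH^s}}{\lambda} \right)^p
\end{equation*}
for every $p \geq r$. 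The right-hand side now depends on $p$, and the strategy is to optimize the choice of $p$ in terms of $\lambda$ and $\|f\|_{\cH^s}$.

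The natural choice is to make the base of the exponent a fixed constant strictly less than $1$, say $1/\sqrt{e}$. This is achieved by setting
\begin{equation*}
p = p(\lambda) := \frac{\lambda^2}{e C^2 \|f\|_{\cH^s}^2},
\end{equation*}
for which $(C\sqrt{p}\, \|f\|_{\cH^s}/\lambda)^p = e^{-p/2}$. Provided $p(\lambda) \geq r$, this yields
\begin{equation*}
\bP( |F(\omega)| > \lambda ) \leq \exp\!\left( - \frac{\lambda^2}{2 e C^2 \|f\|_{\cH^s}^2} \right),
\end{equation*}
so one may take $\delta = 1/(2eC^2)$.

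The only remaining case is the small-$\lambda$ regime where $p(\lambda) < r$, i.e.\ $\lambda < \sqrt{e r}\, C \|f\|_{\cH^s}$. In this regime the Gaussian tail on the right is bounded below by a positive constant depending only on $r$ and $C$, so the inequality $\bP(|F| > \lambda) \leq 1 \leq C_1 e^{-\delta \lambda^2/\|f\|_{\cH^s}^2}$ holds trivially after enlarging the constant $C_1$ to absorb the worst case $\lambda^2/\|f\|_{\cH^s}^2 \leq e r C^2$. There is no substantial obstacle here; the only point requiring mild care is the bookkeeping of the constants $\delta$ and $C_1$ in terms of $C$ and $r$, in particular verifying that they are indeed independent of $f$, which is manifest from the derivation since $\|f\|_{\cH^s}$ only enters through the ratio $\lambda/\|f\|_{\cH^s}$.
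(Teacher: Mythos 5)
Your proof is correct, and it is essentially the argument the paper relies on: the paper gives no details, citing instead Lemma 4.5 of Tzvetkov (whose proof is precisely this Chebyshev-in-$L^p$ bound with $p \sim \lambda^2/\|f\|_{\cH^s}^2$, plus enlarging $C_1$ to handle the regime $p(\lambda) < r$). Your bookkeeping of $\delta$ and $C_1$, depending only on $C$ and $r$, matches what the lemma requires.
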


\section{Averaging effects for the randomized initial data} \label{sec:avg}
\setcounter{equation}{0}

In the proof of Theorem \ref{thm:rho} we decompose the randomized initial data into a low frequency and a high frequency component. More precisely, for $N \in \bN$ define
\begin{equation*}
 f_{1, \leq N}^\omega = \sum_{|k| \leq N} h_k(\omega) P_k f_1
\end{equation*}
and $f_{1, >N}^\omega = f_1^\omega - f_{1, \leq N}^\omega$. Similarly, define $f_{2, \leq N}^\omega$ and $f_{2, >N}^\omega$. The next two lemmata establish probabilistic a priori estimates on the low frequency component. See also Proposition 4.4 in \cite{BT1}.

\begin{lemma}
 Let $3 \leq \rho < 5$. Let $s \geq 0$ and $f = (f_1, f_2) \in \cH^s(\bR^3)$. For any $K > 0$ and $N \in \bN$, let
 \begin{equation} \label{equ:A_K,N}
  A_{K, N} := \bigl\{ \omega \in \Omega : \|f_{1, \leq N}^\omega\|^{(\rho+1)/2}_{L^{\rho+1}_x(\bR^3)} \leq K \bigr\}.
 \end{equation}
 Then there exist constants $C \equiv C(\rho) > 0$ and $c \equiv c(\rho) > 0$ such that
 \begin{equation} \label{equ:prob_estimate_A_KN}
  \bP(A_{K,N}^c) \leq C e^{- c K^{4/(\rho+1)}/\|f\|^2_{\cH^s(\bR^3)}} 
 \end{equation}
  for every $K > 0$ and $N \in \bN$.
\end{lemma}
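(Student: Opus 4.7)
The estimate is a standard large deviation bound applied to the $L^{\rho+1}_x$ norm of the low-frequency randomized initial data. The plan is first to compute an $L^p_\omega$ bound on $\|f_{1,\leq N}^\omega\|_{L^{\rho+1}_x(\bR^3)}$ that grows like $\sqrt{p}\,\|f\|_{\cH^s}$, and then to invoke Lemma~\ref{lem:tzvetkov_probest} to convert this moment bound into the desired Gaussian tail estimate, with a final rescaling $\lambda \mapsto K^{2/(\rho+1)}$ to account for the power $(\rho+1)/2$ appearing in \eqref{equ:A_K,N}.

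For the moment bound, fix $p \geq \rho+1$ and apply Minkowski's integral inequality to exchange $L^{\rho+1}_x$ and $L^p_\omega$:
\begin{equation*}
 \bigl\| \|f_{1,\leq N}^\omega\|_{L^{\rho+1}_x} \bigr\|_{L^p_\omega} \leq \bigl\| \|f_{1,\leq N}^\omega(x)\|_{L^p_\omega} \bigr\|_{L^{\rho+1}_x}.
\end{equation*}
Pointwise in $x$, the quantity $f_{1,\leq N}^\omega(x) = \sum_{|k|\leq N} h_k(\omega) (P_k f_1)(x)$ is a finite sum of independent mean-zero random variables with coefficients $(P_k f_1)(x)$, so Proposition~\ref{prop:large_deviation_estimate} yields
\begin{equation*}
 \|f_{1,\leq N}^\omega(x)\|_{L^p_\omega} \leq C \sqrt{p} \biggl( \sum_{|k|\leq N} |(P_k f_1)(x)|^2 \biggr)^{1/2}.
\end{equation*}
The square function on the right is then controlled in $L^{\rho+1}_x$ via Minkowski (since $\rho+1 \geq 2$) followed by the unit-scale Bernstein estimate from Lemma~\ref{lem:unit_bernstein}:
\begin{equation*}
 \biggl\| \Bigl( \sum_{|k|\leq N} |P_k f_1|^2 \Bigr)^{1/2} \biggr\|_{L^{\rho+1}_x} \leq \biggl( \sum_{|k|\leq N} \|P_k f_1\|_{L^{\rho+1}_x}^2 \biggr)^{1/2} \lesssim \biggl( \sum_{k \in \bZ^3} \|P_k f_1\|_{L^2_x}^2 \biggr)^{1/2} \sim \|f_1\|_{L^2_x} \leq \|f\|_{\cH^s},
\end{equation*}
where we used $s \geq 0$ in the last step. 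Combining these estimates gives $\bigl\| \|f_{1,\leq N}^\omega\|_{L^{\rho+1}_x} \bigr\|_{L^p_\omega} \leq C \sqrt{p}\, \|f\|_{\cH^s}$ for every $p \geq \rho+1$, uniformly in $N$.

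With the moment bound in hand, apply Lemma~\ref{lem:tzvetkov_probest} with $F(\omega) = \|f_{1,\leq N}^\omega\|_{L^{\rho+1}_x}$ and $r = \rho+1$. This yields constants $C_1, \delta > 0$ depending only on $\rho$ such that
\begin{equation*}
 \bP\bigl( \|f_{1,\leq N}^\omega\|_{L^{\rho+1}_x} > \lambda \bigr) \leq C_1 e^{-\delta \lambda^2 / \|f\|^2_{\cH^s}} \qquad \text{for every } \lambda > 0.
\end{equation*}
Specializing to $\lambda = K^{2/(\rho+1)}$, the event $A_{K,N}^c = \{\|f_{1,\leq N}^\omega\|^{(\rho+1)/2}_{L^{\rho+1}_x} > K\}$ coincides with $\{\|f_{1,\leq N}^\omega\|_{L^{\rho+1}_x} > K^{2/(\rho+1)}\}$, giving \eqref{equ:prob_estimate_A_KN}. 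There is no serious obstacle here; the only step requiring minor care is the interchange of norms and the use of unit-scale Bernstein, which avoids any loss of derivatives and is precisely what permits the bound by $\|f_1\|_{L^2_x}$ rather than by a higher Sobolev norm.
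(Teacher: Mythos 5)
Your proof is correct and follows essentially the same route as the paper: Minkowski to exchange $L^{\rho+1}_x$ and $L^p_\omega$ for $p \geq \rho+1$, the large deviation estimate of Proposition~\ref{prop:large_deviation_estimate} pointwise in $x$, Minkowski again for the square function, the unit-scale Bernstein estimate of Lemma~\ref{lem:unit_bernstein} to drop to $L^2_x$, and finally Lemma~\ref{lem:tzvetkov_probest}. Your explicit rescaling $\lambda = K^{2/(\rho+1)}$ at the end just spells out the step the paper leaves implicit in ``the claim follows from Lemma~\ref{lem:tzvetkov_probest}.''
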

\begin{proof}
 For every $p \geq \rho+1$ and every $N \in \bN$, we bound 
 \begin{align}
  \|f_{1, \leq N}^\omega\|_{L^p_\omega(\Omega; L^{\rho+1}_x(\bR^3))} &= \Bigl\| \sum_{|k| \leq N} h_k(\omega) P_k f_1 \Bigr\|_{L^p_\omega(\Omega; L^{\rho + 1}_x(\bR^3))} \\
  &\leq \Bigl\| \bigl\| \sum_{|k| \leq N} h_k(\omega) P_k f_1  \bigr\|_{L^p_\omega(\Omega)}  \Big\|_{L^{\rho+1}_x(\bR^3)} \label{equ:averaging_L_rho_1} \\
  &\lesssim \sqrt{p} \, \Bigl\| \bigl( \sum_{|k| \leq N} |P_k f_1(x)|^2  \bigr)^{1/2} \Big\|_{L^{\rho+1}_x(\bR^3)} \label{equ:averaging_L_rho_2}\\
  &\lesssim \sqrt{p} \, \Bigl( \sum_{|k| \leq N} \|P_k f_1\|_{L^{\rho+1}_x(\bR^3)}^2  \Bigr)^{1/2} \\
  &\lesssim \sqrt{p} \, \Bigl( \sum_{|k| \leq N} \|P_k f_1\|_{L^{2}_x(\bR^3)}^2  \Bigr)^{1/2} \label{equ:averaging_L_rho_3}\\ 
  &\lesssim \sqrt{p} \, \|f_1\|_{L^2_x(\bR^3)}. 
 \end{align}
Using $p \geq \rho +1$, we switched the order of integration in \eqref{equ:averaging_L_rho_1} and used the large deviation estimate~\eqref{equ:large_deviation_estimate} in \eqref{equ:averaging_L_rho_2}. We then used the unit-scale Bernstein estimate~\eqref{equ:bernstein_estimate} in \eqref{equ:averaging_L_rho_3}. The claim follows from Lemma \ref{lem:tzvetkov_probest}. 
\end{proof}

\begin{lemma}
 Let $s \in \bR$ and $f = (f_1, f_2) \in \cH^s(\bR^3)$. For any $K > 0$ and $N \in \bN$, let
 \begin{equation} \label{equ:B_K,N}
  B_{K, N} := \bigl\{ \omega \in \Omega : \|f_{1, \leq N}^\omega\|_{H^s_x(\bR^3)} + \|f_{2, \leq N}^\omega\|_{H^{s-1}_x(\bR^3)} \leq K \bigr\}.
 \end{equation}
 Then there exist constants $C \equiv C(s) > 0$ and  $c \equiv c(s) > 0$ such that
 \begin{equation} \label{equ:prob_estimate_B_KN}
  \bP(B_{K,N}^c) \leq C e^{- c K^2/\|f\|^2_{\cH^s(\bR^3)}}
 \end{equation}
 for every $K > 0$ and $N \in \bN$.
\end{lemma}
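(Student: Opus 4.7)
The plan is to mimic the proof of the previous lemma: bound the $L^p_\omega$-norm of $F(\omega) := \|f_{1,\leq N}^\omega\|_{H^s_x} + \|f_{2,\leq N}^\omega\|_{H^{s-1}_x}$ by $C \sqrt{p}\, \|f\|_{\cH^s}$ for all $p \geq 2$, and then invoke Lemma \ref{lem:tzvetkov_probest} to convert this moment bound into the desired Gaussian-type tail estimate \eqref{equ:prob_estimate_B_KN}.

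To bound $\|\,\|f_{1,\leq N}^\omega\|_{H^s_x}\,\|_{L^p_\omega}$, I would first rewrite the $H^s_x$ norm as an $L^2_x$ norm of $\langle \nabla \rangle^s f_{1,\leq N}^\omega = \sum_{|k|\leq N} h_k(\omega)\, \langle \nabla \rangle^s P_k f_1$. Since $p \geq 2$, Minkowski's inequality allows me to interchange the $L^p_\omega$ and $L^2_x$ norms:
\begin{equation*}
\Bigl\| \|f_{1,\leq N}^\omega\|_{H^s_x} \Bigr\|_{L^p_\omega(\Omega)}
\leq \biggl\| \Bigl\| \sum_{|k|\leq N} h_k(\omega)\, \langle \nabla \rangle^s P_k f_1(x) \Bigr\|_{L^p_\omega(\Omega)} \biggr\|_{L^2_x(\bR^3)}.
\end{equation*}
The inner $L^p_\omega$ norm is then estimated by the large deviation inequality \eqref{equ:large_deviation_estimate}, yielding a pointwise (in $x$) bound by $C \sqrt{p} \bigl( \sum_{|k|\leq N} |\langle \nabla \rangle^s P_k f_1(x)|^2 \bigr)^{1/2}$. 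Taking the $L^2_x$ norm and applying the frequency norm equivalence \eqref{eq:norm_equiv} gives
\begin{equation*}
\Bigl\| \|f_{1,\leq N}^\omega\|_{H^s_x} \Bigr\|_{L^p_\omega(\Omega)}
\lesssim \sqrt{p}\, \Bigl( \sum_{|k|\leq N} \|P_k f_1\|_{H^s_x(\bR^3)}^2 \Bigr)^{1/2}
\lesssim \sqrt{p}\, \|f_1\|_{H^s_x(\bR^3)}.
\end{equation*}
The same argument applied to $f_{2,\leq N}^\omega$ with $s$ replaced by $s-1$ and the $\nu_k$ moment hypothesis for $l_k$ gives $\|\,\|f_{2,\leq N}^\omega\|_{H^{s-1}_x}\,\|_{L^p_\omega} \lesssim \sqrt{p}\, \|f_2\|_{H^{s-1}_x}$. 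Adding the two bounds yields $\|F\|_{L^p_\omega(\Omega)} \leq C \sqrt{p}\, \|f\|_{\cH^s}$ for every $p \geq 2$.

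With this moment estimate in hand, Lemma \ref{lem:tzvetkov_probest} (applied with $r = 2$) produces constants $\delta, C_1 > 0$ independent of $f$ and $N$ such that
\begin{equation*}
\bP(B_{K,N}^c) = \bP\bigl( \omega \in \Omega : F(\omega) > K \bigr) \leq C_1 e^{- \delta K^2 / \|f\|^2_{\cH^s(\bR^3)}},
\end{equation*}
which is precisely \eqref{equ:prob_estimate_B_KN}. Note that, unlike the preceding lemma, no unit-scale Bernstein estimate is needed here, since the norm equivalence \eqref{eq:norm_equiv} already controls the spatial $L^2$-based norms directly; the only place where the distributional assumption on $h_k$ and $l_k$ enters is the large deviation inequality. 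I do not expect any serious obstacle: the only mild point of care is the interchange of norms via Minkowski, which is valid precisely because we work with $p \geq 2$ and the $H^s_x$ norm is $L^2_x$-based.
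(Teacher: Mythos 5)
Your proposal is correct and follows essentially the same route as the paper: apply $(1-\Delta)^{s/2}$ (resp.\ $(1-\Delta)^{(s-1)/2}$), use Minkowski with $p\geq 2$ to pass to the pointwise $L^p_\omega$ norm, invoke the large deviation estimate \eqref{equ:large_deviation_estimate}, sum using the almost orthogonality of the unit-scale projections as in \eqref{eq:norm_equiv}, and conclude via Lemma \ref{lem:tzvetkov_probest}. Your observation that no unit-scale Bernstein estimate is needed here (in contrast to the $A_{K,N}$ lemma) is also consistent with the paper's argument.
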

\begin{proof}
 For every $p \geq 2$ and every $N \in \bN$, using the large deviation estimate \eqref{equ:large_deviation_estimate} we have
 \begin{equation*}
  \begin{split}
   \bigl\| (1-\Delta)^{s/2} f_{1, \leq N}^\omega \bigr\|_{L^p_\omega(\Omega; L^2_x(\bR^3))} &= \Bigl\| \sum_{|k| \leq N} h_k(\omega) ( (1-\Delta)^{s/2} P_k f_1 ) \Bigr\|_{L^p_\omega(\Omega; L^2_x(\bR^3))} \\
   &\lesssim \sqrt{p} \Bigl\| \Bigl( \sum_{|k|\leq N} \bigl|( (1-\Delta)^{s/2} P_k f_1)(x)\bigr|^2 \Bigr)^{1/2} \Bigr\|_{L^2_x(\bR^3)} \\
   &\lesssim \sqrt{p} \Bigl( \sum_{|k| \leq N} \bigl\| (1-\Delta)^{s/2} P_k f_1 \bigr\|^2_{L^2_x(\bR^3)} \Bigr)^{1/2} \\
   &\lesssim \sqrt{p} \|f_1\|_{H^s_x(\bR^3)}.
  \end{split}
 \end{equation*}
 Similarly, we obtain
 \begin{equation*}
  \bigl\| (1-\Delta)^{(s-1)/2} f_{2, \leq N}^\omega \bigr\|_{L^p_\omega(\Omega; L^2_x(\bR^3))} \lesssim \sqrt{p} \|f_2\|_{H^{s-1}_x(\bR^3)}.
 \end{equation*}
 The assertion then follows from Lemma \ref{lem:tzvetkov_probest}.
\end{proof}

Analogously to \eqref{eq:free_evolution}, define the free evolution of the high frequency component of the randomized initial data by
\begin{equation*}
 u_{f, >N}^\omega =  \cos(t|\nabla|) f_{1, >N}^\omega + \frac{\sin(t|\nabla|)}{|\nabla|} f_{2,>N}^\omega.
\end{equation*}
In the next lemma we prove probabilistic a priori estimates which exhibit a decay in $N$ on certain space-time norms of the free evolution $u_{f, >N}^\omega$ once one restricts to suitable subsets of the probability space $\Omega$. The decay is ultimately the reason why we obtain an improved range of regularity exponents $s$ in Theorem \ref{thm:rho} compared to \cite[Theorem~1.2]{KPV} and results from the use of the unit-scale Bernstein estimate \eqref{equ:bernstein_estimate} and the Strichartz estimate \eqref{equ:strichartz_estimates}.

%The averaging effects for the free evolution $u_{f, >N}^\omega$ in Lemma~\ref{lemma:averaging_effects_rho} are a key element in the proof of Theorem~\ref{thm:rho}. Note that these bounds yield a decay in $N$ of certain space-time norms of $u_{f, >N}^\omega$ once one restricts to suitable subsets of the probability space $\Omega$. In particular, although we will use similar arguments as in Kenig-Ponce-Vega \cite{KPV}, this decay is the reason why we obtain an improved range of regularity exponents~$s$ in Theorem~\ref{thm:rho} compared to \cite[Theorem~1.2]{KPV}.
\begin{lemma} \label{lemma:averaging_effects_rho}
 Let $T > 0$ and $3 \leq \rho < 5$. Let $0 < s < 1$ and $0 < \varepsilon < \min(\frac{s}{2}, \frac{1}{2}(1-\frac{1}{\rho}))$. Let $f = (f_1, f_2) \in \cH^s(\bR^3)$. For any $K > 0$ and $N \geq 3$, let
 \begin{equation} \label{equ:D_K,N}
  D_{K, N, \,\varepsilon} := \bigl\{ \omega \in \Omega : N^{s - 2\varepsilon} \|u_{f, >N}^\omega\|_{L^{1/\varepsilon}_t L^{2\rho}_x([0,T]\times\bR^3)} \leq K \bigr\}.
 \end{equation}
 Then there exist constants $C \equiv C(s, \rho, \varepsilon) > 0$ and $c \equiv c(s, \rho, \varepsilon) > 0$ such that
 \begin{equation} \label{equ:prob_estimate_D_KN}
  \bP(D_{K,N, \,\varepsilon}^c) \leq C e^{- c K^2/\|f\|^2_{\cH^s(\bR^3)}} 
 \end{equation}
 for every $K > 0$ and $N \geq 3$.
\end{lemma}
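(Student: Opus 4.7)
The proof structure mirrors the two preceding lemmata: the plan is to show that for every $p$ sufficiently large,
\[
\bigl\| N^{s-2\varepsilon} \|u_{f,>N}^\omega\|_{L^{1/\varepsilon}_t L^{2\rho}_x([0,T]\times\bR^3)} \bigr\|_{L^p_\omega(\Omega)} \lesssim \sqrt{p}\, \|f\|_{\cH^s(\bR^3)},
\]
and then to invoke Lemma~\ref{lem:tzvetkov_probest} to obtain~\eqref{equ:prob_estimate_D_KN}. First, I would decompose
\[
u^\omega_{f,>N}(t,x) = \sum_{|k|>N}\bigl( h_k(\omega) v_k(t,x) + l_k(\omega) w_k(t,x) \bigr),
\]
where $v_k(t) := \cos(t|\nabla|) P_k f_1$ and $w_k(t) := \frac{\sin(t|\nabla|)}{|\nabla|} P_k f_2$ are both Fourier-supported in $\{|\xi - k|\le 2\}$ for each $t$. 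For $p \ge \max(1/\varepsilon, 2\rho)$ one applies Minkowski's inequality to interchange $L^p_\omega$ with the space-time norm, then invokes the large deviation estimate \eqref{equ:large_deviation_estimate} pointwise in $(t,x)$, and finally uses Minkowski once more (legitimate because $1/\varepsilon, 2\rho \ge 2$) to pull the $\ell^2_k$-sum outside $L^{1/\varepsilon}_t L^{2\rho}_x$:
\[
\bigl\| u^\omega_{f,>N} \bigr\|_{L^p_\omega L^{1/\varepsilon}_t L^{2\rho}_x} \lesssim \sqrt{p}\, \Bigl( \sum_{|k|>N} \|v_k\|^2_{L^{1/\varepsilon}_t L^{2\rho}_x} + \|w_k\|^2_{L^{1/\varepsilon}_t L^{2\rho}_x} \Bigr)^{1/2}.
\]

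The crucial step is the individual bound on $\|v_k\|_{L^{1/\varepsilon}_t L^{2\rho}_x}$. Applying Strichartz directly at the pair $(1/\varepsilon, 2\rho)$ would require the unfavorably large regularity $\gamma = \tfrac{3}{2} - \varepsilon - \tfrac{3}{2\rho}$ and yield a non-summable series. Instead, I would set $r_0 := \tfrac{2}{1-2\varepsilon}$, so that $(1/\varepsilon, r_0)$ saturates wave-admissibility~\eqref{equ:wave} with equality and is Strichartz-admissible at the much smaller regularity $\gamma_0 = 2\varepsilon$. The hypothesis $\varepsilon < \tfrac{1}{2}(1-\tfrac{1}{\rho})$ is precisely what forces $r_0 \le 2\rho$, enabling the unit-scale Bernstein estimate (applied pointwise in $t$, using that $v_k(t,\cdot)$ is unit-scale frequency-localized near $k$) to give
\[
\|v_k\|_{L^{1/\varepsilon}_t L^{2\rho}_x} \lesssim \|v_k\|_{L^{1/\varepsilon}_t L^{r_0}_x} \lesssim \|P_k f_1\|_{\dot H^{2\varepsilon}_x},
\]
where the second inequality is Proposition~\ref{prop:strichartz_estimates}. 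The analogous computation applied to $w_k$ (Strichartz with vanishing first datum and second datum $P_k f_2$) yields $\|w_k\|_{L^{1/\varepsilon}_t L^{2\rho}_x} \lesssim \|P_k f_2\|_{\dot H^{2\varepsilon - 1}_x}$.

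For $|k| > N \ge 3$, the support of $\psi_k$ lies in $\{|\xi| \sim \langle k\rangle\}$, so that $\|P_k f_1\|_{\dot H^{2\varepsilon}_x} \sim \langle k\rangle^{2\varepsilon - s}\|P_k f_1\|_{H^s_x}$ and, similarly, $\|P_k f_2\|_{\dot H^{2\varepsilon - 1}_x} \sim \langle k\rangle^{2\varepsilon - s}\|P_k f_2\|_{H^{s-1}_x}$. The hypothesis $\varepsilon < s/2$ ensures $2\varepsilon - s < 0$, and hence $\langle k\rangle^{2(2\varepsilon - s)} \le N^{2(2\varepsilon - s)}$ on the range $|k|>N$. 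Summing and using~\eqref{eq:norm_equiv},
\[
\sum_{|k|>N}\bigl(\|v_k\|^2_{L^{1/\varepsilon}_t L^{2\rho}_x} + \|w_k\|^2_{L^{1/\varepsilon}_t L^{2\rho}_x}\bigr) \lesssim N^{2(2\varepsilon - s)}\, \|f\|^2_{\cH^s(\bR^3)}.
\]
Multiplying by $N^{s - 2\varepsilon}$ gives the desired $L^p_\omega$ estimate, and Lemma~\ref{lem:tzvetkov_probest} then yields~\eqref{equ:prob_estimate_D_KN}. The main subtlety I expect is the balanced selection of $r_0$: the two hypotheses on $\varepsilon$ correspond, respectively, to wave-admissibility combined with the Bernstein constraint $r_0 \le 2\rho$, and to the summability threshold $\gamma_0 = 2\varepsilon < s$; neither Strichartz nor unit-scale Bernstein in isolation yields the required decay in $N$ at the full range of regularities $s$.
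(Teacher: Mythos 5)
Your proposal is correct and follows essentially the same route as the paper's proof: Minkowski plus the large deviation estimate, then the unit-scale Bernstein estimate to pass from $L^{2\rho}_x$ to $L^{r(\varepsilon)}_x$ with $r(\varepsilon)=\frac{2}{1-2\varepsilon}$, Strichartz at regularity $\gamma=2\varepsilon$, and the frequency-support observation for $|k|>N\geq 3$ to extract the $N^{-(s-2\varepsilon)}$ decay before invoking Lemma~\ref{lem:tzvetkov_probest}. The only detail the paper makes explicit that you omit is the pointwise a.e.\ finiteness of the $\ell^2_k$ sums (via Plancherel) needed to apply \eqref{equ:large_deviation_estimate}, a minor technical point.
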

\begin{proof}
 Set $r(\varepsilon) = \frac{2}{1-2\varepsilon}$. Then the exponent pair $(\frac{1}{\varepsilon}, r(\varepsilon))$ is wave-admissible and Strichartz-admissible at regularity $\gamma = 2 \varepsilon$. For every $p \geq \max(\frac{1}{\varepsilon}, 2\rho)$ and every $N \geq 3$, we now~estimate
 \begin{align}
   &\bigl\| u_{f, >N}^\omega \bigr\|_{L^p_\omega(\Omega; L^{1/\varepsilon}_t L^{2 \rho}_x([0,T]\times\bR^3))} \notag \\
   &\lesssim \sqrt{p} \Bigl( \sum_{|k| > N} \bigl\| \cos(t|\nabla|) P_k f_1 \bigr\|^2_{L^{1/\varepsilon}_t L^{2\rho}_x([0,T]\times\bR^3)} \Bigr)^{1/2}  \label{eq:averaging_rho_1} \\
   &\quad \quad + \sqrt{p} \Bigl( \sum_{|k| > N} \Bigl\| \frac{\sin(t|\nabla|)}{|\nabla|}  P_k f_2 \Bigr\|^2_{L^{1/\varepsilon}_t L^{2\rho}_x([0,T]\times\bR^3)} \Bigr)^{1/2} \notag \\
   &\lesssim \sqrt{p} \Bigl( \sum_{|k| > N} \bigl\| \cos(t|\nabla|) P_k f_1 \bigr\|^2_{L^{1/\varepsilon}_t L^{r(\varepsilon)}_x([0,T]\times\bR^3)} \Bigr)^{1/2} \label{eq:averaging_rho_2} \\
   &\quad \quad + \sqrt{p} \Bigl( \sum_{|k| > N} \Bigl\| \frac{\sin(t|\nabla|)}{|\nabla|} P_k f_2 \Bigr\|^2_{L^{1/\varepsilon}_t L^{r(\varepsilon)}_x([0,T]\times\bR^3)} \Bigr)^{1/2}  \notag \\
   &\lesssim \sqrt{p} \Bigl( \sum_{|k| > N} \bigl\| P_k f_1 \bigr\|^2_{\dot{H}^{2\varepsilon}_x(\bR^3)} \Bigr)^{1/2} + \sqrt{p} \Bigl( \sum_{|k| > N} \bigl\| P_k f_2 \bigr\|^2_{\dot{H}^{2\varepsilon -1}_x(\bR^3)} \Bigr)^{1/2} \label{eq:averaging_rho_3} \\
   &\lesssim \sqrt{p} N^{-(s-2\varepsilon)} \Bigl( \sum_{|k| > N} \bigl\| P_k f_1 \bigr\|^2_{\dot{H}^{s}_x(\bR^3)} \Bigr)^{1/2} + \sqrt{p} N^{-(s-2\varepsilon)} \Bigl( \sum_{|k| > N} \bigl\| P_k f_2 \bigr\|^2_{\dot{H}^{s -1}_x(\bR^3)} \Bigr)^{1/2} \\
   &\lesssim \sqrt{p} N^{-(s-2\varepsilon)} \Bigl( \sum_{|k| > N} \bigl\| P_k f_1 \bigr\|^2_{H^s_x(\bR^3)} \Bigr)^{1/2} + \sqrt{p} N^{-(s-2\varepsilon)} \Bigl( \sum_{|k| > N} \bigl\| P_k f_2 \bigr\|^2_{H^{s-1}_x(\bR^3)} \Bigr)^{1/2} \label{eq:averaging_rho_4} \\ 
   &\lesssim \sqrt{p} N^{-(s-2\varepsilon)} \|f\|_{\cH^s(\bR^3)}. \notag
 \end{align}
Using $p \geq \max(\frac{1}{\varepsilon}, 2\rho)$, we switched the order of integration and used the large deviation estimate \eqref{equ:large_deviation_estimate} in \eqref{eq:averaging_rho_1}. The assumption $f \in \cH^s(\bR^3)$ together with Plancherel's theorem guarantees that 
 \begin{equation*}
 (\sum_{k \in \bZ^3} |(\cos(t|\nabla|) P_k f_1)(x)|^2 )^{1/2} < \infty \qquad \textup{and} \qquad \Bigl(\sum_{k \in \bZ^3} \Bigl|\Bigl(\frac{\sin(t|\nabla|)}{|\nabla|} P_k f_2 \Bigr)(x) \Bigr|^2 \Bigr)^{1/2} < \infty
 \end{equation*}
 for almost every $x \in \bR^3$ and for every $t \in [0,T]$, allowing us to apply \eqref{equ:large_deviation_estimate}. We use the unit-scale Bernstein estimate \eqref{equ:bernstein_estimate} in \eqref{eq:averaging_rho_2}, noting that $r(\varepsilon) \leq 2 \rho$, and then apply the Strichartz estimates \eqref{equ:strichartz_estimates} at regularity $\gamma = 2 \varepsilon$ in \eqref{eq:averaging_rho_3}. In  \eqref{eq:averaging_rho_4} we may estimate $\|P_k f_2 \|_{\dot{H}^{s-1}_x(\bR^3)} \lesssim \| P_k f_2 \|_{H^{s-1}_x(\bR^3)}$ uniformly for all $|k| > N \geq 3$ even though $s-1 < 0$, since $\cF(P_k f_2)(\xi) = 0$ for $|\xi| < 1$ for all $|k| > N \geq 3$ due to the support properties of the unit-scale projections. The claim then follows from Lemma \ref{lem:tzvetkov_probest}.
\end{proof}

\section{Proof of Theorem \ref{thm:rho}} \label{sec:nlw_existence}
\setcounter{equation}{0}

This section is devoted to the proof of the following proposition, which immediately implies Theorem~\ref{thm:rho}.
\begin{proposition} \label{prop:rho}
 Let $T > 0$. Let $3 \leq \rho < 5$ and 
 \begin{equation*}
  \frac{\rho^3+5\rho^2-11\rho-3}{9\rho^2-6\rho-3} < s < 1.
 \end{equation*}
 Fix $f = (f_1, f_2) \in H^s_x(\bR^3) \times H^{s-1}_x(\bR^3)$. Let $f^\omega = (f_1^\omega, f_2^\omega)$ be the associated randomized initial data as defined in~\eqref{equ:bighsrandomization} and $u_f^\omega$ the corresponding free evolution as defined in~\eqref{eq:free_evolution}. Then there exists $\Omega_T \subset \Omega$ with $\bP(\Omega_T) = 1$ such that for every $\omega \in \Omega_T$ there exists a unique solution
 \begin{equation} \label{equ:solution_form_rho}
  (u, u_t) \in (u_f^\omega, \partial_t u_f^\omega) + C\bigl([0,T]; H^1_x(\bR^3) \times L^2_x(\bR^3)\bigr)
 \end{equation}
 to the nonlinear wave equation
 \begin{equation} \label{equ:nlw_rho}
\left\{  \begin{split}
   -u_{tt} + \Delta u &= |u|^{\rho-1} u \text{ on } [0,T] \times \bR^3, \\
   (u, u_t)|_{t=0} &= (f_1^\omega, f_2^\omega).
  \end{split} \right.
 \end{equation}
 Here, uniqueness only holds in a mild sense; see Remark \ref{remark:uniqueness}.
\end{proposition}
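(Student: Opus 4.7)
The plan is to combine probabilistic local well-posedness with Bourgain's high--low frequency decomposition, in the spirit of the Colliander--Oh adaptation of the sub-critical scheme of Kenig--Ponce--Vega. Writing $v := u - u_f^\omega$, the difference $v$ should satisfy
\begin{equation*}
-v_{tt} + \Delta v = |u_f^\omega + v|^{\rho-1}(u_f^\omega + v), \qquad (v, v_t)|_{t=0} = (0,0),
\end{equation*}
and the goal is $v \in C([0,T]; H^1_x \times L^2_x)$. A standard contraction in a suitable Strichartz space, using Lemma~\ref{lemma:averaging_effects_rho} to control $u_f^\omega$ in $L^{1/\varepsilon}_t L^{2\rho}_x$, yields a local solution on some random interval $[0, T_\ast(\omega))$. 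The task reduces to propagating an energy-type bound on $v$ across all of $[0,T]$.

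To do so, I would fix a large parameter $N = N(T,\omega)$ and split the randomized data at unit-scale frequency $N$. Let $u^{\textup{low}}$ denote the deterministic sub-critical global energy solution of \eqref{equ:nlw_rho} with data $(f^\omega_{1,\leq N}, f^\omega_{2,\leq N})$; on the good events $A_{K,N}$ and $B_{K,N}$ of the first two lemmata of Section~\ref{sec:avg} its initial, hence conserved, energy is of size at most $N^{2(1-s)}$ in view of \eqref{equ:prob_estimate_A_KN}--\eqref{equ:prob_estimate_B_KN} combined with the unit-scale Bernstein bound. Let $u^{\textup{high}} := u^\omega_{f,>N}$ be the free evolution of the high-frequency part, which by Lemma~\ref{lemma:averaging_effects_rho} satisfies $\|u^{\textup{high}}\|_{L^{1/\varepsilon}_t L^{2\rho}_x([0,T]\times \bR^3)} \lesssim K N^{-(s-2\varepsilon)}$. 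Setting $u = u^{\textup{low}} + u^{\textup{high}} + w$ with $(w, w_t)|_{t=0} = (0,0)$, the remainder solves
\begin{equation*}
-w_{tt} + \Delta w = F(u^{\textup{low}} + u^{\textup{high}} + w) - F(u^{\textup{low}}), \qquad F(z) := |z|^{\rho-1}z.
\end{equation*}

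The heart of the argument is an energy estimate for $w$. I would differentiate the modified energy
\begin{equation*}
\cE(w)(t) := \tfrac{1}{2}\|w_t\|^2_{L^2_x} + \tfrac{1}{2}\|\nabla w\|^2_{L^2_x} + \tfrac{1}{\rho+1}\int_{\bR^3}\bigl(|u^{\textup{low}}+w|^{\rho+1} - |u^{\textup{low}}|^{\rho+1} - (\rho+1)\,|u^{\textup{low}}|^{\rho-1}u^{\textup{low}}\, w\bigr)\,dx,
\end{equation*}
which is designed so that the pure low-frequency interactions cancel and only terms at least linear in $u^{\textup{high}}$ remain. Each such term should be controlled by H\"older's inequality together with Strichartz estimates for $u^{\textup{low}}$ (whose energy is of size $N^{2(1-s)}$), the smallness of $u^{\textup{high}}$ from Lemma~\ref{lemma:averaging_effects_rho}, and reabsorption of the $\|w\|_{L^{\rho+1}_x}$ pieces into $\cE(w)$. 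Closing a Gr\"onwall loop on $[0,T]$ should yield $\cE(w)(T) \lesssim_T N^{2(1-s)}\cdot N^{-\delta}$ for some $\delta = \delta(s,\rho,\varepsilon) > 0$, which tends to zero for $N$ large provided $s$ lies above the stated threshold. A diagonal argument across a sequence $T_n \nearrow \infty$ with shrinking exceptional sets, together with the Borel--Cantelli lemma, then produces the full-measure set $\Omega_T$; uniqueness in the class $u_f^\omega + C([0,T]; H^1_x \times L^2_x)$ is inherited from the contraction used in the local step.

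The principal obstacle, and the source of the precise exponent $\frac{\rho^3+5\rho^2-11\rho-3}{9\rho^2-6\rho-3}$, is exactly this high--low energy inequality: one must distribute the $\rho-1$ factors of $|u^{\textup{low}}|, |u^{\textup{high}}|, |w|$ appearing in each error term into wave-admissible Strichartz spaces so that at least one $N^{-(s-2\varepsilon)}$ factor from $u^{\textup{high}}$ is extracted, and the net power of $N$ still outpaces the $N^{2(1-s)}$ growth carried by the low-frequency energy. Optimizing this trade at the Strichartz pair tied to $\rho$ (essentially $L^\rho_t L^{2\rho}_x$, as in Kenig--Ponce--Vega) is precisely what forces the stated lower bound on $s$, and the restriction $\rho < 5$ guarantees that the relevant pair remains sub-critical and wave-admissible throughout the argument.
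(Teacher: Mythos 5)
There is a genuine gap at the heart of your argument: the single modified-energy/Gr\"onwall loop over all of $[0,T]$ does not close for $3 < \rho < 5$. When you differentiate $\cE(w)(t)$, the error terms are not all linear in small quantities; among them you get self-interaction terms such as $\int |u^{\textup{high}}|\,|w|^{\rho-1}|w_t|\,dx$ and, from the time derivative of the modified potential, terms like $\int |\partial_t u^{\textup{low}}|\,|w|^{\rho}\,dx$. Estimating these requires $\|w\|_{L^{2\rho}_x}$-type control, and for $\rho>3$ one has $2\rho > 6 > \rho+1$, so neither the kinetic part ($\dot H^1_x \hookrightarrow L^6_x$) nor the potential part ($L^{\rho+1}_x$) of $\cE(w)$ controls these norms. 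This is precisely the obstruction the paper flags in the introduction when explaining why the Burq--Tzvetkov-style energy argument works only for $\rho=3$: one needs $\|\cdot\|_{L^\rho_t L^{2\rho}_x}$ information that energy alone does not provide. Moreover, even if one could close a Gr\"onwall inequality schematically, the Gr\"onwall factor would be exponential in space-time norms of the low-frequency solution, which are of size a power of $N^{1-s}$ on $[0,T]$; an $\exp(C_T N^{\beta(1-s)})$ loss annihilates the polynomial gain $N^{-(s-2\varepsilon)}$ from Lemma~\ref{lemma:averaging_effects_rho}, so the claimed bound $\cE(w)(T)\lesssim_T N^{2(1-s)}N^{-\delta}$, and with it the threshold $s>\frac{\rho^3+5\rho^2-11\rho-3}{9\rho^2-6\rho-3}$, cannot be derived this way.

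The paper's proof avoids both problems by iterating in time rather than running one global energy estimate. It works on intervals of length $T_1 = c(KN^{1-s})^{-(\rho-1)/\alpha(\rho)}$, on which the energy solution $v^{(j)}$ obeys the Strichartz bound \eqref{equ:strichartz_bounds_on_v}, and solves for the remainder $\widetilde{w}^{(j)}$ (with zero data) by a contraction in $L^{q(\rho)}_t L^{2\rho}_x$ (Lemma~\ref{lemma:probabilistic_lwp}), which supplies exactly the missing $L^{2\rho}_x$ control. Crucially, at the end of each interval the nonlinear remainder $\widetilde{w}^{(j)}(jT_1)$ is re-absorbed into the data of the next energy-class evolution $v^{(j+1)}$, so the low-frequency part is always controlled by energy conservation and the energy grows only through the additive increments \eqref{equ:energy_increment}; summing these over $\lceil T/T_1\rceil$ steps and demanding the total stay below $(KN^{1-s})^2$ gives condition \eqref{equ:condition}, which is where the stated exponent actually comes from. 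Your identification of the good sets $A_{K,N}$, $B_{K,N}$, $D_{K,N,\varepsilon}$, the size $N^{2(1-s)}$ of the low-frequency energy, and the role of $L^\rho_t L^{2\rho}_x$ is correct, but without the interval-by-interval contraction and the re-absorption step your scheme cannot reach time $T$; also note that for fixed $T$ the paper simply takes $\Omega_T=\bigcup_K \Omega_{K,T}$, so no Borel--Cantelli/diagonal argument is needed at this stage, and uniqueness for the high-frequency part holds only in the mild sense of Remark~\ref{remark:uniqueness}.
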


\begin{proof}[Proof of Theorem \ref{thm:rho}]
We only present the argument to construct solutions that exist for all positive times since the argument for negative times is similar. Define $T_j = j$ for $j \in \bN$ and set $\Sigma = \bigcap_{j=1}^\infty \Omega_{T_j}$. By Proposition~\ref{prop:rho}, we have that $\bP(\Sigma) = 1$ and for every $\omega \in \Sigma$, we have global existence for \eqref{equ:nlw_rho_global} on the time interval $[0, \infty)$.
\end{proof}

In the proof of Proposition \ref{prop:rho} we will repeatedly use the following probabilistic low regularity local well-posedness result whose proof we defer to the end of this section. It is here that we invoke the crucial averaging effects for the free evolution $u_{f, >N}^\omega$ from Lemma~\ref{lemma:averaging_effects_rho}. We introduce the notation
\begin{align*}
 q(\rho) = \frac{2 \rho}{\rho - 3}, \quad \alpha(\rho) = \frac{5 - \rho}{2}.
\end{align*}

\begin{lemma} \label{lemma:probabilistic_lwp}
Let $3 \leq \rho < 5$ and $0 < s < 1.$ Let $0 < \varepsilon < \min(\frac{s}{2}, \frac{1}{2}(1-\frac{1}{\rho}))$ and $K > 0$ be fixed. For $N \geq 3$ and $0 < c < 1$ set $T_1 = c (K N^{1-s} )^{-(\rho - 1)/\alpha(\rho)}$. Let $v: [0,T_1]\times\bR^3 \rightarrow \bC$ satisfy
 \begin{equation} \label{equ:lwp_v_bound}
  \|v\|_{L^{q(\rho)}_t L^{2 \rho}_x([0,T_1]\times\bR^3)} \leq C K N^{1-s}
 \end{equation}
 and let $\omega \in \Omega$ be such that
 \begin{equation} \label{equ:lwp_u_bound}
  \bigl\| u_{f, >N}^\omega \bigr\|_{L^{1/\varepsilon}_t L^{2\rho}_x([0, T_1]\times\bR^3)} \leq K N^{-s + 2 \varepsilon}.
 \end{equation}
 For $0 < c < 1$ sufficiently small (independent of the size of $K$ and $N$) and $N \equiv N(K)$ sufficiently large, there exists a unique solution
 \begin{equation*}
  (\widetilde{w}, \widetilde{w}_t) \in C\bigl([0,T_1]; \dot{H}^1_x(\bR^3)\bigr) \cap L^{q(\rho)}_t L^{2 \rho}_x\bigl([0,T_1]\times\bR^3\bigr) \times C\bigl([0,T_1]; L^2_x(\bR^3)\bigr)
 \end{equation*}
 to the nonlinear wave equation
 \begin{equation} \label{equ:lwp_nlw}
  \left\{\begin{split}
    -\widetilde{w}_{tt} + \Delta \widetilde{w} &= \bigl| v + u_{f, >N}^\omega + \widetilde{w} \bigr|^{\rho -1} \bigl( v + u_{f, >N}^\omega + \widetilde{w} \bigr) - |v|^{\rho -1} v \text{ on } [0,T_1] \times \bR^3, \\
    (\widetilde{w}, \widetilde{w}_t)|_{t = 0} &= (0,0),
  \end{split} \right.
 \end{equation}
 satisfying
 \begin{equation} \label{equ:lwp_w_bounds}
  \begin{split}
  \|\widetilde{w}(T_1)\|_{\dot{H}^1_x(\bR^3)} + \|\widetilde{w}_t(T_1)\|_{L^2_x(\bR^3)} + \|\widetilde{w}(T_1)\|_{L^{\rho + 1}_x(\bR^3)} & \lesssim T_1^{1 - \frac{\rho-1}{q(\rho)} -\varepsilon}  K^\rho N^{2\varepsilon + (1-s)\rho-1}.
  \end{split}
 \end{equation}
\end{lemma}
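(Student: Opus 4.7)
The plan is to construct $\widetilde{w}$ as the unique fixed point of the Duhamel map
\begin{equation*}
\Phi(\widetilde{w})(t) := -\int_0^t \frac{\sin((t-s)|\nabla|)}{|\nabla|} F(\widetilde{w})(s) \, ds, \quad F(\widetilde{w}) := |v + u_{f,>N}^\omega + \widetilde{w}|^{\rho-1}\bigl(v + u_{f,>N}^\omega + \widetilde{w}\bigr) - |v|^{\rho-1} v,
\end{equation*}
in a closed ball of the Banach space
\begin{equation*}
X_{T_1} := C\bigl([0,T_1]; \dot{H}^1_x(\bR^3)\bigr) \cap L^{q(\rho)}_t L^{2\rho}_x([0,T_1]\times\bR^3)
\end{equation*}
equipped with $\|\widetilde{w}\|_{X_{T_1}} := \|\widetilde{w}\|_{L^\infty_t \dot{H}^1_x} + \|\widetilde{w}_t\|_{L^\infty_t L^2_x} + \|\widetilde{w}\|_{L^{q(\rho)}_t L^{2\rho}_x}$. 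The pair $(q(\rho), 2\rho)$ is wave-admissible and Strichartz-admissible at regularity $\gamma = 1$, since $\frac{1}{q(\rho)} + \frac{3}{2\rho} = \frac{\rho - 3}{2\rho} + \frac{3}{2\rho} = \frac{1}{2}$. Applying Proposition~\ref{prop:strichartz_estimates} with dual pair $(\tilde{q}, \tilde{r}) = (\infty, 2)$ then yields $\|\Phi(\widetilde{w})\|_{X_{T_1}} \lesssim \|F(\widetilde{w})\|_{L^1_t L^2_x}$.

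Next, I estimate $F$ via the pointwise bound
\begin{equation*}
|F(\widetilde{w})| \lesssim \bigl(|u_{f,>N}^\omega| + |\widetilde{w}|\bigr)\bigl(|v|^{\rho-1} + |u_{f,>N}^\omega|^{\rho-1} + |\widetilde{w}|^{\rho-1}\bigr),
\end{equation*}
applying H\"older in space with the split $\frac{1}{2} = \frac{1}{2\rho} + \frac{\rho-1}{2\rho}$, and then H\"older in time placing $v$ and $\widetilde{w}$ in $L^{q(\rho)}_t$ and $u_{f,>N}^\omega$ in $L^{1/\varepsilon}_t$. Contractivity is ruled by the critical Lipschitz term $v^{\rho-1}\widetilde{w}$:
\begin{equation*}
\|v^{\rho-1}\widetilde{w}\|_{L^1_t L^2_x} \lesssim T_1^{\alpha(\rho)} \|v\|_{L^{q(\rho)}_t L^{2\rho}_x}^{\rho-1} \|\widetilde{w}\|_{X_{T_1}} \lesssim T_1^{\alpha(\rho)} (KN^{1-s})^{\rho-1} \|\widetilde{w}\|_{X_{T_1}},
\end{equation*}
where I used $\rho/q(\rho) + \alpha(\rho) = 1$. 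The prescribed choice $T_1 = c(KN^{1-s})^{-(\rho-1)/\alpha(\rho)}$ with $c$ small is precisely tuned so that this coefficient does not exceed $\tfrac{1}{2}$, which (combined with the analogous estimate for $F(\widetilde{w}_1) - F(\widetilde{w}_2)$) gives a contraction via the Banach fixed point theorem.

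The quantitative bound~\eqref{equ:lwp_w_bounds} is read off from the dominant source term $\|v^{\rho-1} u_{f,>N}^\omega\|_{L^1_t L^2_x}$, which under the same space-H\"older followed by time-H\"older with exponents $\frac{\rho-1}{q(\rho)}$, $\varepsilon$, and $\theta := 1 - \frac{\rho-1}{q(\rho)} - \varepsilon$ obeys
\begin{equation*}
\|v^{\rho-1} u_{f,>N}^\omega\|_{L^1_t L^2_x} \lesssim T_1^\theta (KN^{1-s})^{\rho-1} \cdot KN^{-s+2\varepsilon} = T_1^\theta K^\rho N^{(1-s)\rho - 1 + 2\varepsilon},
\end{equation*}
which matches the stated right-hand side. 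The remaining terms (pure powers of $u_{f,>N}^\omega$, cross terms with more copies of $u_{f,>N}^\omega$, and $\widetilde{w}^\rho$) all carry strictly more negative powers of $N$ and are absorbed either into the contraction or the main bound provided $N \equiv N(K)$ is large enough that $T_1 \leq 1$ and $T_1^{\alpha(\rho)} A^{\rho-1} \ll 1$, where $A$ denotes the claimed right-hand side of~\eqref{equ:lwp_w_bounds}. Finally, the $L^{\rho+1}_x$ bound on $\widetilde{w}(T_1)$ follows by interpolating $L^{\rho+1}_x$ between $L^2_x$ and $L^6_x$ (via $\dot{H}^1_x \hookrightarrow L^6_x$) with weight $\frac{5-\rho}{2(\rho+1)}$ on the $L^2$ factor; since $\widetilde{w}(0) = 0$, one has $\|\widetilde{w}(T_1)\|_{L^2_x} \leq T_1 \|\widetilde{w}_t\|_{L^\infty_t L^2_x}$, so the resulting interpolation factor $T_1^{(5-\rho)/(2(\rho+1))} \leq 1$ is harmless.

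The main obstacle is the careful bookkeeping of H\"older exponents: one needs to verify that $\alpha(\rho) = (5-\rho)/2 > 0$ and $\theta > 0$ hold uniformly over $3 \leq \rho < 5$ and small $\varepsilon$, that each of the six nonlinear contributions to $F$ matches or is strictly dominated by the claimed $N$-power $N^{(1-s)\rho - 1 + 2\varepsilon}$, and that the quantitative constraints on $T_1$ (both for the contraction and for $T_1 \leq 1$) are simultaneously satisfied. The endpoint $\rho = 3$ degenerates as $q(\rho) = \infty$ so that $v, \widetilde{w} \in L^\infty_t L^6_x$, but with the convention $\frac{\rho-1}{q(\rho)} = 0$ the same estimates go through.
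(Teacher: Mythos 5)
Your proposal is correct and follows essentially the same route as the paper: a Banach fixed point for the Duhamel map in $L^{q(\rho)}_t L^{2\rho}_x$ using Strichartz at regularity $\gamma=1$ with dual pair $(\infty,2)$, the same pointwise difference bound and H\"older splittings, the dominant cross term $|v|^{\rho-1}|u_{f,>N}^\omega|$ producing the stated bound, and the contraction driven by $T_1^{\alpha(\rho)}(KN^{1-s})^{\rho-1}=c^{\alpha(\rho)}$. The only (harmless) deviation is the final $\|\widetilde{w}(T_1)\|_{L^{\rho+1}_x}$ estimate, which you obtain by interpolating between $L^2_x$ (via $\widetilde{w}(0)=0$ and the $\widetilde{w}_t$ bound) and $L^6_x$, whereas the paper uses $H^1_x\hookrightarrow L^{\rho+1}_x$ together with a direct $H^1_x$ bound on the Duhamel integral; both give the claimed estimate.
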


We now present the proof of Proposition \ref{prop:rho}.

\begin{proof}[Proof of Proposition \ref{prop:rho}]
The bulk of the proof is devoted to the construction of subsets $\Omega_{K,T} \subset \Omega$ for every $K \in \bN$ such that for every $\omega \in \Omega_{K,T}$ there exists a unique solution of the form \eqref{equ:solution_form_rho} to \eqref{equ:nlw_rho} and such that
\begin{equation} \label{equ:probability_estimate_Omega_K}
 \bP(\Omega_{K,T}^c) \leq C e^{- C K^{ 4/(\rho +1)} / \|f\|^2_{\cH^s(\bR^3)} }.
\end{equation}
We then set $\Omega_T = \cup_{K=1}^{\infty} \Omega_{K,T}$ and conclude from \eqref{equ:probability_estimate_Omega_K} that $\bP(\Omega_T) = 1$, which completes the proof of Proposition~\ref{prop:rho}.

\medskip

In what follows, let $K \in \bN$ be fixed. Let $N \equiv N(K,T) \in \bN$ be sufficiently large, to be fixed later in the proof. We also make use of a fixed small parameter $0 < \varepsilon < \min(\frac{s}{2}, \frac{1}{2}(1-\frac{1}{\rho}))$ whose value depends on $\rho$ and $s$, but is independent of $K$ and $N$, and is specified further below.
We define 
\begin{equation*}
 \Omega_{K, T} = A_{K, N} \cap B_{K, N} \cap D_{K, N, \,\varepsilon},
\end{equation*}
where these sets are as in \eqref{equ:A_K,N}, \eqref{equ:B_K,N} and \eqref{equ:D_K,N}. The estimate \eqref{equ:probability_estimate_Omega_K} then follows from \eqref{equ:prob_estimate_A_KN}, \eqref{equ:prob_estimate_B_KN} and \eqref{equ:prob_estimate_D_KN}.  

From now on we only consider $\omega \in \Omega_{K,T}$. It suffices to show that there exists a unique solution
\begin{equation}
  (u, u_t) \in (u_f^\omega, \partial_t u_f^\omega) + C\bigl([0,T]; \dot H^1_x(\bR^3) \times L^2_x(\bR^3)\bigr),
\end{equation}
since by a persistence of regularity argument, one has $u \in u_f^\omega + C\bigl([0,T];  H^1_x(\bR^3)\bigr)$.

We first construct a solution $u^{(1)} = v^{(1)} + w^{(1)}$ to \eqref{equ:nlw_rho} on a small time interval $[0,T_1]$ with $0 < T_1 < 1$ to be fixed later and where $v^{(1)}$ solves the following nonlinear wave equation with low frequency initial data
\begin{equation} \label{equ:nlw_for_v_first_iteration}
\left\{ \begin{split}
   -v^{(1)}_{tt} + \Delta v^{(1)} &= |v^{(1)}|^{\rho-1} v^{(1)} \text{ on } [0,T_1] \times \bR^3, \\
   (v^{(1)}, v^{(1)}_t)|_{t=0} &= (f_{1, \leq N}^\omega, f_{2, \leq N}^\omega). 
 \end{split} \right.
\end{equation}
Note that the initial data $(f_{1, \leq N}^\omega, f_{2, \leq N}^\omega)$ lies in $\dot{H}^1_x(\bR^3) \times L^2_x(\bR^3)$, since by \eqref{equ:B_K,N}
\begin{equation} \label{equ:bound_kinetic_energy_low_frequency_data}
 \|f_{1, \leq N}^\omega\|_{\dot{H}^1_x(\bR^3)} + \|f_{2, \leq N}^\omega\|_{L^2_x(\bR^3)} \lesssim N^{1-s} (\|f_{1, \leq N}^\omega\|_{H^s_x(\bR^3)} + \|f_{2, \leq N}^\omega\|_{H^{s-1}_x(\bR^3)} ) \lesssim K N^{1-s}. 
\end{equation}
Thus, by the deterministic global existence theory \cite[Proposition 3.2]{GV} there exists a unique solution $(v^{(1)}, v^{(1)}_t) \in C\bigl([0,T_1]; \dot{H}^1_x(\bR^3) \times L^2_x(\bR^3)\bigr)$ to \eqref{equ:nlw_for_v_first_iteration}. Moreover, we have energy conservation since $\|f_{1, \leq N}^\omega\|^{\rho +1}_{L^{\rho + 1}_x(\bR^3)} \leq K^2$ by \eqref{equ:A_K,N}. Hence, for all $t \in [0, T_1]$
\begin{equation} \label{equ:energy_conservation}
 \begin{split}
  E(v^{(1)}(t)) &:= \frac{1}{2} \|v^{(1)}(t)\|^2_{\dot{H}^1_x(\bR^3)} + \frac{1}{2} \|v^{(1)}_t(t)\|^2_{L^2_x(\bR^3)} + \frac{1}{\rho +1} \|v^{(1)}(t)\|^{\rho +1}_{L^{\rho +1}_x(\bR^3)} \\
  &= \frac{1}{2} \|f_{1, \leq N}^\omega\|^2_{\dot{H}^1_x(\bR^3)} + \frac{1}{2} \|f_{2, \leq N}^\omega\|^2_{L^2_x(\bR^3)} + \frac{1}{\rho +1} \|f_{1, \leq N}^\omega\|^{\rho+1}_{L^{\rho +1}_x(\bR^3)} \\
  &\lesssim ( K N^{1-s} )^2.
 \end{split}
\end{equation}
We note that the exponent pair $(q(\rho), 2 \rho)$ is Strichartz-admissible at regularity $\gamma = 1$. Using Strichartz estimates \eqref{equ:strichartz_estimates} and \eqref{equ:bound_kinetic_energy_low_frequency_data}, we find
\begin{equation*}
 \begin{split}
  \|v^{(1)}\|_{L^{q(\rho)}_t L^{2\rho}_x([0,T_1]\times\bR^3)} &\lesssim \|v^{(1)}(0)\|_{\dot{H}^1_x(\bR^3)} + \|v^{(1)}_t(0)\|_{L^2_x(\bR^3)} + \bigl\| |v^{(1)}|^{\rho -1} v^{(1)} \bigr\|_{L^1_t L^2_x([0,T_1]\times\bR^3)} \\
  &\lesssim K N^{1-s} + T_1^{\alpha(\rho)} \|v^{(1)}\|^\rho_{L^{q(\rho)}_t L^{2 \rho}_x([0,T_1]\times\bR^3)}.
 \end{split}
\end{equation*}
Hence, choosing $T_1 = c ( K N^{1-s} )^{-\frac{\rho-1}{\alpha(\rho)}}$ with $0 < c < 1$ sufficiently small (independently of the size of $K$ and $N$), we obtain 
\begin{equation} \label{equ:strichartz_bounds_on_v}
 \|v^{(1)}\|_{L^{q(\rho)}_t L^{2 \rho}_x([0,T_1]\times\bR^3)} \lesssim K N^{1-s}.
\end{equation}

Next, we consider the nonlinear wave equation that $w^{(1)} = u^{(1)} - v^{(1)}$ must satisfy, namely
\begin{equation} \label{equ:nlw_for_w_first_iteration}
\left\{ \begin{split}
   -w^{(1)}_{tt} + \Delta w^{(1)} &= \bigl|v^{(1)} + w^{(1)}\bigr|^{\rho-1} \bigl(v^{(1)} + w^{(1)}\bigr) - \bigl| v^{(1)} \bigr|^{\rho-1} v^{(1)} \text{ on } [0,T_1] \times \bR^3, \\
   (w^{(1)}, w^{(1)}_t)|_{t=0} &= (f_{1, > N}^\omega, f_{2, > N}^\omega). 
 \end{split} \right.
\end{equation}
We look for a solution of the form
\begin{equation*}
 w^{(1)} = u_{f, > N}^\omega + \widetilde{w}^{(1)},
\end{equation*}
where $\widetilde{w}^{(1)}$ solves the following initial value problem on $[0,T_1] \times \bR^3$
\begin{equation} \label{equ:nlw_for_w_tilde_first_iteration}
 \left\{
 \begin{split} 
  -\widetilde{w}^{(1)}_{tt} + \Delta \widetilde{w}^{(1)} &= \bigl|v^{(1)} + u_{f, >N}^\omega + \widetilde{w}^{(1)} \bigr|^{\rho-1} \bigl(v^{(1)} + u_{f, >N}^\omega + \widetilde{w}^{(1)} \bigr) - |v^{(1)}|^{\rho-1} v^{(1)} , \\
  (\widetilde{w}^{(1)}, \widetilde{w}^{(1)}_t)|_{t=0} &= (0, 0). 
 \end{split} \right.
\end{equation}
Using \eqref{equ:strichartz_bounds_on_v} and the averaging effects \eqref{equ:D_K,N} for the free evolution $u_{f, > N}^\omega$, Lemma \ref{lemma:probabilistic_lwp} yields a unique solution 
\begin{equation*}
(\widetilde{w}^{(1)}, \widetilde{w}^{(1)}_t) \in C\bigl([0,T_1]; \dot{H}^1_x(\bR^3)\bigr) \cap L^{q(\rho)}_t L^{2 \rho}_x\bigl([0,T_1]\times\bR^3\bigr) \times C\bigl([0,T_1]; L^2_x(\bR^3)\bigr)
\end{equation*}
to \eqref{equ:nlw_for_w_tilde_first_iteration} provided $0 < c < 1$ in the definition of $T_1$ is chosen sufficiently small (independently of the size of $K$ and $N$) and $N$ is chosen sufficiently large. Moreover, we have 
\begin{equation} \label{equ:lwp_w_1_bounds}
 \begin{split}
 &\|\widetilde{w}^{(1)}(T_1)\|_{\dot{H}^1_x(\bR^3)} + \|\widetilde{w}^{(1)}_t(T_1)\|_{L^2_x(\bR^3)} + \|\widetilde{w}^{(1)}(T_1)\|_{L^{\rho + 1}_x(\bR^3)} \\ 
 &\quad \quad \lesssim T_1^{1 - \frac{\rho-1}{q(\rho)} -\varepsilon}  K^\rho N^{2\varepsilon + (1-s)\rho-1}.
 \end{split}
\end{equation}

\medskip

In the next step we build a solution $u^{(2)} = v^{(2)} + w^{(2)}$ to \eqref{equ:nlw_rho} on the time interval $[T_1, 2 T_1]$. As before, we would like to construct $v^{(2)}$ using the deterministic global existence theory at the energy level and construct $w^{(2)}$ through the probabilistic local well-posedness result from Lemma \ref{lemma:probabilistic_lwp}. To this end we note that $w^{(1)}$ is comprised of the free evolution $u_{f, >N}^\omega$, which is at low regularity, and the nonlinear component $\widetilde{w}^{(1)}$, which lies in the energy space by \eqref{equ:lwp_w_1_bounds}. As initial data for $v^{(2)}$ at time $T_1$ we therefore take the sum of $v^{(1)}(T_1)$ and $\widetilde{w}^{(1)}(T_1)$, and consider
\begin{equation} \label{equ:nlw_for_v_second_iteration}
\left\{ \begin{split}
  -v^{(2)}_{tt} + \Delta v^{(2)} &= | v^{(2)} |^{\rho-1} v^{(2)} \text{ on } [T_1, 2 T_1] \times \bR^3, \\
  (v^{(2)}, v^{(2)}_t)|_{t=T_1} &= (v^{(1)}(T_1) + \widetilde{w}^{(1)}(T_1), v^{(1)}_t(T_1) + \widetilde{w}^{(1)}_t(T_1)).
 \end{split}  \right.
\end{equation}
Once again, by the deterministic global theory, this initial value problem has a unique solution 
\begin{align*}
(v^{(2)}, v^{(2)}_t) \in C\bigl([T_1, 2 T_1]; \dot{H}^1_x(\bR^3) \times L^2_x(\bR^3)\bigr).
\end{align*}
Moreover, we obtain bounds on the $L^{q(\rho)}_t L^{2\rho}_x([T_1, 2 T_1]\times\bR^3)$-norm of~$v^{(2)}$ as in \eqref{equ:strichartz_bounds_on_v}. Using these bounds and the averaging effects \eqref{equ:D_K,N}, we apply Lemma~\ref{lemma:probabilistic_lwp} to solve the difference equation for $w^{(2)} = u^{(2)} - v^{(2)}$ on $[T_1, 2 T_1] \times \bR^3$,
\begin{equation} \label{equ:nlw_for_w_second_iteration}
 \left\{
 \begin{split}
  -w^{(2)}_{tt} + \Delta w^{(2)} &= \bigl| v^{(2)} + w^{(2)} \bigr|^{\rho-1} \bigl( v^{(2)} + w^{(2)} \bigr) - \bigl| v^{(2)} \bigr|^{\rho -1} v^{(2)} , \\
  (w^{(2)}, w^{(2)}_t)|_{t=T_1} &= \bigl( u_{f, >N}^\omega(T_1), \partial_t u_{f, > n}^\omega(T_1) \bigr).
 \end{split} \right.
\end{equation}
Note that Lemma \ref{lemma:probabilistic_lwp} can also be applied on the time interval $[T_1, 2 T_1]$ by time translation. We therefore find a solution $w^{(2)} = u_{f, >N}^\omega + \widetilde{w}^{(2)}$ to \eqref{equ:nlw_for_w_second_iteration} with 
\begin{align*}
(\widetilde{w}^{(2)}, \widetilde{w}^{(2)}_t) \in C\bigl([T_1, 2 T_1]; \dot{H}^1_x(\bR^3)\bigr) \cap L^{q(\rho)}_t L^{2 \rho}_x\bigl([T_1, 2 T_1] \times \bR^3\bigr)  \times C\bigl([T_1, 2T_1]; L^2_x(\bR^3)\bigr).
\end{align*}

\medskip

In order to obtain a solution $u$ to \eqref{equ:nlw_rho} on the whole time interval $[0, T]$, we iterate this procedure on consecutive intervals for $\lceil \frac{T}{T_1} \rceil$ times. At every step we redistribute the data as in \eqref{equ:nlw_for_v_second_iteration}. To make the process uniform and thus reach the time $T$, we have to take into account that this redistribution increases the energy at each step. To estimate this growth, we invoke energy conservation for the solution to \eqref{equ:nlw_for_v_first_iteration}. We have
\begin{equation*}
 E( v^{(1)}(T_1) + \widetilde{w}^{(1)}(T_1) ) = E(v^{(1)}(0)) + \Bigl( E(v^{(1)}(T_1) + \widetilde{w}^{(1)}(T_1)) - E(v^{(1)}(T_1)) \Bigr),
\end{equation*}
and
\begin{equation} \label{equ:energy_increment}
 \begin{split}
  &E(v^{(1)}(T_1) + \widetilde{w}^{(1)}(T_1)) - E(v^{(1)}(T_1)) \\
  &\lesssim \|v^{(1)}(T_1)\|_{\dot{H}^1_x(\bR^3)} \|\widetilde{w}^{(1)}(T_1)\|_{\dot{H}^1_x(\bR^3)} + \|v_t^{(1)}(T_1)\|_{L^2_x(\bR^3)} \|\widetilde{w}_t^{(1)}(T_1)\|_{L^2_x(\bR^3)} \\
  &\quad \quad + \|v^{(1)}(T_1)\|_{L^{\rho+1}_x(\bR^3)}^{\rho} \|\widetilde{w}^{(1)}(T_1)\|_{L^{\rho+1}_x(\bR^3)} + E(\widetilde{w}^{(1)}(T_1)) \\
  &\lesssim E(v^{(1)}(0))^{\frac{1}{2}} \bigl( \|\widetilde{w}^{(1)}(T_1)\|_{\dot{H}^1_x(\bR^3)} + \|\widetilde{w}_t^{(1)}(T_1)\|_{L^2_x(\bR^3)}\bigr) \\
  &\quad \quad + E(v^{(1)}(0))^{\frac{\rho}{\rho+1}} \|\widetilde{w}^{(1)}(T_1)\|_{L^{\rho+1}_x(\bR^3)} + E(\widetilde{w}^{(1)}(T_1)).
 \end{split}
\end{equation}
The term $E(v^{(1)}(0))^{\frac{\rho}{\rho+1}} \|\widetilde{w}^{(1)}(T_1)\|_{L^{\rho+1}_x(\bR^3)}$ gives the largest contribution to the energy increment \eqref{equ:energy_increment}. 
In light of \eqref{equ:energy_conservation} and \eqref{equ:lwp_w_1_bounds}, we must ensure that
\begin{equation*}
 \frac{T}{T_1} \Bigl( (K N^{1-s})^2 \Bigr)^{\frac{\rho}{\rho+1}} \Bigl( T_1^{1 - \frac{\rho-1}{q(\rho)} -\varepsilon} K^\rho N^{2\varepsilon +(1-s)\rho -1} \Bigr) \lesssim (K N^{1-s})^2.
\end{equation*}
Inserting $T_1 = c (K N^{1-s})^{-\frac{\rho-1}{\alpha(\rho)}}$, this is equivalent to
\begin{equation}
\label{equ:condition}
 T c^{-\frac{\rho-1}{q(\rho)}-\varepsilon} K^{ \frac{9\rho^2-6\rho-3}{(5-\rho)(\rho+1)\rho} +\varepsilon \frac{2(\rho-1)}{5-\rho} } N^{ \frac{\rho^3+5\rho^2-11\rho-3 - s (9\rho^2-6\rho-3)}{(5-\rho)(\rho+1)\rho} + \varepsilon ( (1-s)\frac{2(\rho-1)}{5-\rho} +2) } \lesssim 1.
\end{equation}
For any $s > \frac{\rho^3+5\rho^2-11\rho-3}{9\rho^2-6\rho-3}$, we can make the exponent on $N$ in \eqref{equ:condition} negative by fixing $\varepsilon > 0$ sufficiently small at the beginning (depending on the values of $s$ and $\rho$, which are fixed during the course of the argument). Hence, taking $N \equiv N(K,T)$ sufficiently large we can ensure that condition \eqref{equ:condition} is satisfied. This completes the proof of Proposition~\ref{prop:rho}.
\end{proof}

We now present the proof of Lemma~\ref{lemma:probabilistic_lwp}.

\begin{proof}[Proof of Lemma~\ref{lemma:probabilistic_lwp}]
In this proof we are working on $[0,T_1] \times \bR^3$ and will omit this notation. For $\widetilde{w} \in L^{q(\rho)}_t L^{2 \rho}_x$ define the map
 \begin{equation*}
  \Phi(\widetilde{w})(t) = - \int_0^t \frac{\sin((t-s)|\nabla|)}{|\nabla|} \left( \bigl| v + u_{f, >N}^\omega + \widetilde{w} \bigr|^{\rho -1} \bigl( v + u_{f, >N}^\omega + \widetilde{w} \bigr) - |v|^{\rho -1} v \right)(s) \, ds.
 \end{equation*}
 Using the general inequality
 \begin{equation*}
  \bigl| |z_1|^{\rho-1} z_1 - |z_2|^{\rho -1} z_2 \bigr| \lesssim |z_1 - z_2| ( |z_1|^{\rho -1} + |z_2|^{\rho -1} ) \text{ for any } z_1, z_2 \in \bC
 \end{equation*}
 and the Strichartz estimates \eqref{equ:strichartz_estimates} at regularity $\gamma =1$ we bound
 \begin{equation} \label{equ:Phi_estimate_into_ball}
  \begin{split}
   & \| \Phi(\widetilde{w}) \|_{L^{\infty}_t \dot{H}^1_x} + \|\partial_t \Phi(\widetilde{w})\|_{L^{\infty}_t L^2_x} + \| \Phi(\widetilde{w}) \|_{L^{q(\rho)}_t L^{2\rho}_x} \\
   &\lesssim \Bigl\| \bigl| v + u_{f, > N}^\omega + \widetilde{w} \bigl|^{\rho -1} (v + u_{f, >N}^\omega + \widetilde{w}) - |v|^{\rho-1} v \bigr| \Bigr\|_{L^1_t L^2_x} \\
   &\lesssim \Bigl\| | u_{f, >N}^\omega +\widetilde{w} | ( |v + u_{f, >N}^\omega + \widetilde{w}|^{\rho-1} + |v|^{\rho -1} ) \Bigr\|_{L^1_t L^2_x} \\
   &\lesssim \Bigl\| (|u_{f, >N}^\omega| + |\widetilde{w}|) (|v|^{\rho-1} + |u_{f, >N}^\omega|^{\rho -1} + |\widetilde{w}|^{\rho-1}) \Bigr\|_{L^1_t L^2_x} \\
   &\lesssim \bigl\| u_{f, >N}^\omega \bigr\|_{L^\rho_t L^{2\rho}_x}^\rho + \|\widetilde{w}\|_{L^\rho_t L^{2\rho}_x}^\rho + \bigl\| |u_{f, >N}^\omega| |v|^{\rho-1} \bigr\|_{L^1_t L^2_x} \\
   &\quad + \bigl\| |\widetilde{w}| |v|^{\rho-1} \bigr\|_{L^1_t L^2_x} + \bigl\| |u_{f, >N}^\omega|^{\rho-1} |\widetilde{w}| \bigr\|_{L^1_t L^2_x} \\
   &= I + II + III + IV + V.
  \end{split}
 \end{equation}
 We now estimate the terms $I - V$ separately.

\medskip

\noindent {\bf Term I:} By H\"older's inequality in time and \eqref{equ:lwp_u_bound}, we obtain
\begin{equation*} 
 \begin{split}
  \bigl\| u_{f, >N}^\omega \bigr\|^\rho_{L^\rho_t L^{2\rho}_x}  &\leq T_1^{1-\rho \varepsilon} \bigl\|u_{f,>N}^\omega\bigr\|^\rho_{L^{1/\varepsilon}_t L^{2\rho}_x} \leq T_1^{1-\rho \varepsilon} (K N^{-s + 2\varepsilon})^\rho.
 \end{split}
\end{equation*}

\noindent {\bf Term II:} By H\"older's inequality in time, we have
\begin{equation*}
 \|\widetilde{w}\|^\rho_{L^\rho_t L^{2\rho}_x} \leq T_1^{\alpha(\rho)} \|\widetilde{w}\|^\rho_{L^{q(\rho)}_t L^{2\rho}_x}.
\end{equation*}

\noindent {\bf Term III:}  Using \eqref{equ:lwp_v_bound} and \eqref{equ:lwp_u_bound}, we find
\begin{equation*} 
 \begin{split}
  \bigl\| |u_{f, >N}^\omega| |v|^{\rho-1} \bigr\|_{L^1_t L^2_x} &\leq \bigl\| u_{f, >N}^\omega \bigr\|_{L^\rho_t L^{2\rho}_x} \|v\|^{\rho-1}_{L^\rho_t L^{2\rho}_x} \\
  &\leq T_1^{1/\rho-\varepsilon} \bigl\|u_{f,>N}^\omega\bigr\|_{L^{1/\varepsilon}_t L^{2\rho}_x} T_1^{\alpha(\rho) (\rho - 1)/\rho} \|v\|^{\rho-1}_{L^{q(\rho)}_t L^{2\rho}_x} \\
  &\lesssim T_1^{\frac{1 + \alpha(\rho)(\rho - 1)}{\rho} -\varepsilon}  (K N^{-s + 2 \varepsilon}) (K N^{1-s})^{\rho-1} \\
  &\lesssim T_1^{1 - \frac{\rho-1}{q(\rho)} -\varepsilon}  K^\rho N^{2\varepsilon + (1-s)\rho-1}.
 \end{split}
\end{equation*}

\noindent {\bf Term IV:} By \eqref{equ:lwp_v_bound} we have
\begin{equation*} 
 \begin{split}
  \bigl\| |\widetilde{w}| |v|^{\rho-1} \bigr\|_{L^1_t L^2_x} &\leq T_1^{\alpha(\rho)/\rho} \bigl\| \widetilde{w} \bigr\|_{L^{q(\rho)}_t L^{2\rho}_x}T_1^{\alpha(\rho)(\rho-1)/\rho}   \|v\|^{\rho-1}_{L^{q(\rho)}_t L^{2\rho}_x} \\
  &\lesssim T_1^{\alpha(\rho)}  \bigl\| \widetilde{w} \bigr\|_{L^{q(\rho)}_t L^{2\rho}_x} (K N^{1-s})^{\rho-1}.
 \end{split}
\end{equation*}

\noindent {\bf Term V:} Using \eqref{equ:lwp_u_bound} we bound
\begin{equation*} 
 \begin{split}
  \bigl\| |u_{f, >N}^\omega|^{\rho-1} |\widetilde{w}| \bigr\|_{L^1_t L^2_x} &\leq \bigl\| u_{f, >N}^\omega \bigr\|^{\rho-1}_{L^\rho_t L^{2\rho}_x} \| \widetilde{w} \|_{L^\rho_t L^{2\rho}_x} \\
  &\leq T_1^{(\rho-1)/\rho - (\rho-1)\varepsilon} \bigl\| u_{f, >N}^\omega \bigr\|^{\rho-1}_{L^{1/\varepsilon}_t L^{2\rho}_x} T_1^{\alpha(\rho)/\rho} \|\widetilde{w}\|_{L^{q(\rho)}_t L^{2\rho}_x} \\
  &= T_1^{\frac{\rho-1 + \alpha(\rho)}{\rho} -(\rho-1)\varepsilon}\bigl\| u_{f, >N}^\omega \bigr\|^{\rho-1}_{L^{1/\varepsilon}_t L^{2\rho}_x}\|\widetilde{w}\|_{L^{q(\rho)}_t L^{2\rho}_x} \\   
  &\leq T_1^{1 - \frac{1}{q(\rho)} -(\rho-1)\varepsilon}  (K N^{-s + 2 \varepsilon})^{\rho-1} \|\widetilde{w}\|_{L^{q(\rho)}_t L^{2\rho}_x}.
 \end{split}
\end{equation*}

\noindent Collecting terms we obtain
\begin{equation} \label{equ:collecting_terms}
 \begin{split}
  &\| \Phi(\widetilde{w}) \|_{L^{\infty}_t \dot{H}^1_x} + \| \partial_t \Phi(\widetilde{w})\|_{L^{\infty}_t L^2_x} + \|\Phi(\widetilde{w})\|_{L^{q(\rho)}_t L^{2\rho}_x} \\
  &\quad \lesssim T_1^{1-\rho\varepsilon} K^\rho N^{-\rho s + 2\rho \varepsilon} +  T_1^{\alpha(\rho)} \|\widetilde{w}\|^\rho_{L^{q(\rho)}_t L^{2\rho}_x} \\
  & \qquad + T_1^{1 - \frac{\rho-1}{q(\rho)} -\varepsilon}  K^\rho N^{2\varepsilon + (1-s)\rho-1}  + T_1^{\alpha(\rho)}  (K N^{1-s})^{\rho-1} \bigl\| \widetilde{w} \bigr\|_{L^{q(\rho)}_t L^{2\rho}_x} \\
  & \qquad + T_1^{1 - \frac{1}{q(\rho)} -(\rho-1)\varepsilon}  (K N^{-s + 2 \varepsilon})^{\rho-1} \|\widetilde{w}\|_{L^{q(\rho)}_t L^{2\rho}_x}.
   \end{split}
\end{equation}
It follows that by choosing $0 < c < 1$ sufficiently small (independently of $K$ and $N$) and $N$ sufficiently large, $\Phi$ maps a ball $B$ of radius $R(K,N) > 0$ with respect to the $L^{q(\rho)}_t L^{2\rho}_x$-norm into itself, where
\begin{equation} \label{equ:ball_radius_estimate}
 R(K, N) \lesssim T_1^{1 - \frac{\rho-1}{q(\rho)} -\varepsilon}  K^\rho N^{2\varepsilon + (1-s)\rho-1}.
\end{equation}
In a similar vein, we show that $\Phi$ is a contraction on $B$ with respect to the $L^{q(\rho)}_t L^{2\rho}_x$-norm for $0 < c < 1$ sufficiently small and $N$ sufficiently large. Thus, $\Phi$ has a unique fixed point $\widetilde{w} \in B$ and $\widetilde{w}$ is the unique solution to \eqref{equ:lwp_nlw}.

In order to obtain \eqref{equ:lwp_w_bounds} it remains to estimate the $L^{\rho+1}_x(\bR^3)$-norm of $\widetilde{w}$ at time $T_1$. By Sobolev embedding and Minkowski's integral inequality we find 
\begin{equation*}
 \begin{split}
  &\|\widetilde{w}(T_1)\|_{L^{\rho+1}_x(\bR^3)} \\
  &\lesssim \|\widetilde{w}(T_1)\|_{H^1_x(\bR^3)} \\
  &\lesssim \Bigl\| \int_0^{T_1} \frac{\sin((T_1-s)|\nabla|)}{|\nabla|} \bigl( |v + u_{f, >N}^\omega + \widetilde{w}|^{\rho -1} (v + u_{f, >N}^\omega + \widetilde{w}) - |v|^{\rho -1} v \bigr)(s) \, ds \Bigr\|_{H^1_x(\bR^3)} \\
  &\lesssim \int_0^{T_1} \Bigl\| \frac{\sin((T_1-s)|\nabla|)}{|\nabla|} \bigl( |v + u_{f, >N}^\omega + \widetilde{w}|^{\rho -1} (v + u_{f, >N}^\omega + \widetilde{w}) - |v|^{\rho -1} v \bigr)(s) \Bigr\|_{H^1_x(\bR^3)} \, ds \\
  &\lesssim \int_0^{T_1} (1 + T_1 -s) \Bigl\| \bigl( |v + u_{f, >N}^\omega + \widetilde{w}|^{\rho -1} (v + u_{f, >N}^\omega + \widetilde{w}) - |v|^{\rho -1} v \bigr)(s) \Bigr\|_{L^2_x(\bR^3)} \, ds \\
  &\lesssim (1+T_1) \Bigl\| |v + u_{f, >N}^\omega + \widetilde{w}|^{\rho -1} (v + u_{f, >N}^\omega + \widetilde{w}) - |v|^{\rho -1} v \Bigr\|_{L^1_t L^2_x} \\
  &\lesssim \Bigl\| |v + u_{f, >N}^\omega + \widetilde{w}|^{\rho -1} (v + u_{f, >N}^\omega + \widetilde{w}) - |v|^{\rho -1} v \Bigr\|_{L^1_t L^2_x},
 \end{split}
\end{equation*}
where we used that $T_1 \leq 1$. From \eqref{equ:Phi_estimate_into_ball}, \eqref{equ:collecting_terms} and \eqref{equ:ball_radius_estimate} we infer
\begin{equation*} 
 \|\widetilde{w}(T_1)\|_{L^{\rho+1}_x(\bR^3)} \lesssim R(K, N) \lesssim T_1^{1 - \frac{\rho-1}{q(\rho)} -\varepsilon}  K^\rho N^{2\varepsilon + (1-s)\rho-1}.
\end{equation*}
This completes the proof of Lemma \ref{lemma:probabilistic_lwp}.
\end{proof}

Finally, we address the uniqueness statements in Theorem \ref{thm:rho} and Proposition \ref{prop:rho}. 

\begin{remark} \label{remark:uniqueness}
Analogously to \cite[Remark 1.2]{CO}, uniqueness for the ``low frequency part" $(v^{(j)}, v^{(j)}_t)$ in the $j$-th step holds in the space $C\bigl([(j-1)T_1, j T_1]; H^1_x(\bR^3) \times L^2_x(\bR^3)\bigr)$. However, uniqueness for the ``high frequency part" $(w^{(j)}, w^{(j)}_t)$ in the $j$-th step only holds in the ball centered at $u_{f, >N}^\omega(t)$ of small radius in $L^{q(\rho)}_t L^{2 \rho}_x\bigl([(j-1) T_1, j T_1] \times \bR^3\bigr)$.
\end{remark}

\section{Proof of Theorem \ref{thm:quintic}} \label{sec:quintic}

In this section we prove the almost sure global existence result for the quintic nonlinear wave equation from Theorem \ref{thm:quintic}. 

\begin{proof}[Proof of Theorem \ref{thm:quintic}]
 We first derive probabilistic a priori estimates on the $L^5_t L^{10}_x(\bR \times \bR^3)$ norm of the free evolution $u_f^\omega$ and then use these to construct global solutions to \eqref{equ:quintic_nlw} through a suitable fixed point argument.

 Since we have by assumption that $\frac{2}{5} \leq s < 1$, there exists $\frac{10}{3} \leq r < 10$ such that the exponent pair $(5, r)$ is wave-admissible and Strichartz-admissible at regularity $s$. Similarly to the proof of Lemma \ref{lemma:averaging_effects_rho}, using the large deviation estimate \eqref{equ:large_deviation_estimate}, the unit-scale Bernstein estimate \eqref{equ:bernstein_estimate} and the Strichartz estimate \eqref{equ:strichartz_estimates} at regularity $s$, we obtain for any $p \geq 10$ 
 \begin{equation*}
  \begin{split}
   &\|u_f^\omega\|_{L^p_\omega(\Omega; L^5_t  L^{10}_x(\bR \times \bR^3))} \\
   &\lesssim \sqrt{p} \Bigl( \sum_{k \in \bZ^3} \bigl\|\cos(t|\nabla|) P_k f_1\bigr\|^2_{L^5_t L^{10}_x(\bR \times \bR^3)} \Bigr)^{1/2} + \sqrt{p} \Bigl( \sum_{k \in \bZ^3} \Bigl\| \frac{\sin(t|\nabla|)}{|\nabla|} P_k f_2 \Bigr\|^2_{L^5_t L^{10}_x(\bR \times \bR^3)} \Bigr)^{1/2} \\
   &\lesssim \sqrt{p} \Bigl( \sum_{k \in \bZ^3} \bigl\|\cos(t|\nabla|) P_k f_1\bigr\|^2_{L^5_t L^r_x(\bR \times \bR^3)} \Bigr)^{1/2} + \sqrt{p} \Bigl( \sum_{k \in \bZ^3} \Bigl\| \frac{\sin(t|\nabla|)}{|\nabla|} P_k f_2 \Bigr\|^2_{L^5_t L^r_x(\bR \times \bR^3)} \Bigr)^{1/2} \\
   &\lesssim \sqrt{p} \Bigl( \sum_{k \in \bZ^3} \|P_k f_1\|^2_{\dot{H}^s_x(\bR^3)} \Bigr)^{1/2} + \sqrt{p} \Bigl( \sum_{k \in \bZ^3} \|P_k f_2\|^2_{\dot{H}^{s-1}_x(\bR^3)} \Bigr)^{1/2} \\
   &\lesssim \sqrt{p} \bigl( \|f_1\|_{H^s_x(\bR^3)} + \|f_2\|_{\dot{H}^{s-1}_x(\bR^3)} \bigr).
  \end{split}
 \end{equation*}
 Then Lemma \ref{lem:tzvetkov_probest} implies that there exist absolute constants $C, c > 0$ such that
 \begin{equation} \label{equ:probability_estimate_event_quintic}
  \bP \bigl( \omega \in \Omega : \|u_f^\omega\|_{L^5_t L^{10}_x(\bR \times \bR^3)} > K \bigl) \leq C e^{-c K^2/\|f\|^2_{H^s_x \times \dot{H}^{s-1}_x}}
 \end{equation}
 for every $K > 0$.

 We now look for global solutions of the form $u = u_f^\omega + w$ to \eqref{equ:quintic_nlw}, where $w$ has to satisfy the nonlinear wave equation
 \begin{equation} \label{equ:quintic_proof_difference_equation} 
  \left\{ \begin{split}
   -w_{tt} + \Delta w &= |u_f^\omega + w|^4 (u_f^\omega + w) \text{ on } \bR \times \bR^3, \\
   (w, w_t)|_{t = 0} &= (0, 0).
  \end{split} \right.
 \end{equation}
It is straightforward to see that there exists $\varepsilon > 0$ such that if $\|u_f^\omega\|_{L^5_t L^{10}_x(\bR \times \bR^3)} \leq \varepsilon$, then the map 
\begin{equation*}
 \Phi(w)(t) := - \int_0^t \frac{\sin((t-s)|\nabla|)}{|\nabla|} \bigl( |u_f^\omega + w|^4 (u_f^\omega + w) \bigr)(s) \, ds 
\end{equation*}
is a contraction on a ball of radius $\varepsilon$ with respect to the $L^5_t L^{10}_x(\bR \times \bR^3)$ norm. Its unique fixed point 
\begin{equation*}
 (w, w_t) \in C(\bR; \dot{H}^1_x(\bR^3)) \cap L^5_t L^{10}_x(\bR \times \bR^3) \times C(\bR; L^2_x(\bR^3))
\end{equation*}
is the global solution to \eqref{equ:quintic_proof_difference_equation}. The probability estimate \eqref{equ:probabilistic_estimate_quintic_thm} on the event $\Omega_f$ in the statement of Theorem \ref{thm:quintic} follows immediately from \eqref{equ:probability_estimate_event_quintic}, while the scattering statement follows readily using that $\bigl\| |u_f^\omega + w|^5 \bigr\|_{L^1_t L^2_x(\bR \times \bR^3)} < \infty$ for every $\omega \in \Omega_f$.
\end{proof}

\bibliographystyle{myamsplain}
\bibliography{refs}

\newcommand{\SortNoop}[1]{}
\providecommand{\bysame}{\leavevmode\hbox to3em{\hrulefill}\thinspace}
\providecommand{\MR}{\relax\ifhmode\unskip\space\fi MR }
% \MRhref is called by the amsart/book/proc definition of \MR.
\providecommand{\MRhref}[2]{%
  \href{http://www.ams.org/mathscinet-getitem?mr=#1}{#2}
}
\providecommand{\href}[2]{#2}
\begin{thebibliography}{10}

\bibitem{BC}
Bahouri, H. and Chemin, J., \emph{On global well-posedness for defocusing cubic
  wave equation}, Int. Math. Res. Not. (2006).

\bibitem{BOP2}
{B\'enyi,~\'{A}}., Oh, T., and Pocovnicu, O., \emph{{On the probabilistic
  Cauchy theory of the cubic nonlinear Schr\"odinger equation on
  $\mathbb{R}^d$, $d \geq 3$}}, arXiv:1405.7327.

\bibitem{BOP1}
\bysame, \emph{{Wiener randomization on unbounded domains and an application to
  almost sure well-posedness of NLS}}, arXiv:1405.7326.

\bibitem{B94}
Bourgain, J., \emph{Periodic nonlinear {S}chr\"odinger equation and invariant
  measures}, Comm. Math. Phys. \textbf{166} (1994), no.~1, 1--26.

\bibitem{B96}
\bysame, \emph{Invariant measures for the {$2$}{D}-defocusing nonlinear
  {S}chr\"odinger equation}, Comm. Math. Phys. \textbf{176} (1996), no.~2,
  421--445.

\bibitem{B98}
\bysame, \emph{Refinements of {S}trichartz' inequality and applications to
  {$2$}{D}-{NLS} with critical nonlinearity}, Int. Math. Res. Not. (1998),
  no.~5, 253--283.

\bibitem{BB}
{Bourgain}, J. and {Bulut}, A., \emph{Invariant gibbs measure evolution for the
  radial nonlinear wave equation on the 3d ball}, J. Funct. Anal. \textbf{266}
  (2014), no.~4, 2319--2340.

\bibitem{BTT}
Burq, N., Thomann, L., and Tzvetkov, N., \emph{{Long time dynamics for the one
  dimensional non linear Schr{\"o}dinger equation}}, arXiv:1002.4054.

\bibitem{BT3}
Burq, N. and Tzvetkov, N., \emph{Invariant measure for a three dimensional
  nonlinear wave equation}, Int. Math. Res. Not. (2007), no.~22.

\bibitem{BT1}
\bysame, \emph{Random data {C}auchy theory for supercritical wave equations.
  {I}. {L}ocal theory}, Invent. Math. \textbf{173} (2008), no.~3, 449--475.

\bibitem{BT2}
\bysame, \emph{Random data {C}auchy theory for supercritical wave equations.
  {II}. {A} global existence result}, Invent. Math. \textbf{173} (2008), no.~3,
  477--496.

\bibitem{BT4}
\bysame, \emph{Probabilistic well-posedness for the cubic wave equation}, J.
  Eur. Math. Soc. \textbf{16} (2014), no.~1, 1--30.

\bibitem{CCT}
Christ, M., Colliander, J., and Tao, T., \emph{{Ill-posedness for nonlinear
  Schr\"odinger and wave equations}}, arXiv:math/0311048.

\bibitem{CO}
Colliander, J. and Oh, T., \emph{Almost sure well-posedness of the cubic
  nonlinear {S}chr\"odinger equation below {$L^2(\mathbb{T})$}}, Duke Math. J.
  \textbf{161} (2012), no.~3, 367--414.

\bibitem{D1}
Deng, Y., \emph{Two-dimensional nonlinear {S}chr\"odinger equation with random
  radial data}, Anal. PDE \textbf{5} (2012), no.~5, 913--960.

\bibitem{GP}
Gallagher, I. and Planchon, F., \emph{On global solutions to a defocusing
  semi-linear wave equation}, Rev. Mat. Iberoamericana \textbf{19} (2003),
  no.~1, 161--177.

\bibitem{GV}
Ginibre, J. and Velo, G., \emph{The global {C}auchy problem for the nonlinear
  {K}lein-{G}ordon equation. {II}}, Ann. Inst. H. Poincar\'e Anal. Non
  Lin\'eaire \textbf{6} (1989), no.~1, 15--35.

\bibitem{GV2}
\bysame, \emph{Generalized {S}trichartz inequalities for the wave equation}, J.
  Funct. Anal. \textbf{133} (1995), no.~1, 50--68.

\bibitem{IMM}
Ibrahim, S., Majdoub, M., and Masmoudi, N., \emph{Ill-posedness of
  {$H^1$}-supercritical waves}, C. R. Math. Acad. Sci. Paris \textbf{345}
  (2007), no.~3, 133--138.

\bibitem{KeelTao}
Keel, M. and Tao, T., \emph{Endpoint {S}trichartz estimates}, Amer. J. Math.
  \textbf{120} (1998), no.~5, 955--980.

\bibitem{KPV}
Kenig, C., Ponce, G., and Vega, L., \emph{Global well-posedness for semi-linear
  wave equations}, Comm. Partial Differential Equations \textbf{25} (2000),
  no.~9-10, 1741--1752.

\bibitem{L}
Lebeau, G., \emph{Perte de r\'egularit\'e pour les \'equations d'ondes
  sur-critiques}, Bull. Soc. Math. France \textbf{133} (2005), no.~1, 145--157.

\bibitem{LRS}
Lebowitz, J., Rose, H., and Speer, E., \emph{Statistical mechanics of the
  nonlinear {S}chr\"odinger equation}, J. Statist. Phys. \textbf{50} (1988),
  no.~3-4, 657--687.

\bibitem{Lindblad_Sogge}
Lindblad, H. and Sogge, C., \emph{On existence and scattering with minimal
  regularity for semilinear wave equations}, J. Funct. Anal. \textbf{130}
  (1995), no.~2, 357--426.

\bibitem{NORS}
Nahmod, A., Oh, T., Rey-Bellet, L., and Staffilani, G., \emph{Invariant
  weighted {W}iener measures and almost sure global well-posedness for the
  periodic derivative {NLS}}, J. Eur. Math. Soc. \textbf{14} (2012), no.~4,
  1275--1330.

\bibitem{NPS}
{Nahmod}, A., {Pavlovi{\'c}}, N., and {Staffilani}, G., \emph{Almost sure
  existence of global weak solutions for supercritical {N}avier-{S}tokes
  equations}, SIAM J. Math. Anal. \textbf{45} (2013), no.~6, 3431--3452.

\bibitem{NRSS}
Nahmod, A., Rey-Bellet, L., Sheffield, S., and Staffilani, G., \emph{Absolute
  continuity of {B}rownian bridges under certain gauge transformations}, Math.
  Res. Lett. \textbf{18} (2011), no.~5, 875--887.

\bibitem{O2}
Oh, T., \emph{Invariance of the white noise for {K}d{V}}, Comm. Math. Phys.
  \textbf{292} (2009), no.~1, 217--236.

\bibitem{Pecher}
Pecher, H., \emph{Nonlinear small data scattering for the wave and
  {K}lein-{G}ordon equation}, Math. Z. \textbf{185} (1984), no.~2, 261--270.

\bibitem{Pocovnicu}
Pocovnicu, O., \emph{{Almost sure global well-posedness for the energy-critical
  defocusing nonlinear wave equation on $\mathbb{R}^d$, $d=4$ and $5$}},
  arXiv:1406.1782.

\bibitem{Poiret2}
{Poiret}, A., \emph{{Solutions globales pour des \'equations de Schr\"odinger
  sur-critiques en toutes dimensions}}, arXiv:1207.3519.

\bibitem{Poiret1}
\bysame, \emph{{Solutions globales pour l'\'equation de Schr\"odinger cubique
  en dimension 3}}, arXiv:1207.1578.

\bibitem{PRT}
{Poiret}, A., Robert, D., and Thomann, L., \emph{{Probabilistic global
  well-posedness for the supercritical nonlinear harmonic oscillator}},
  arXiv:1309.0795.

\bibitem{Ric}
{Richards}, G., \emph{Invariance of the gibbs measure for the periodic quartic
  gkdv}, arXiv:1209.4337.

\bibitem{Roy}
Roy, T., \emph{Adapted linear-nonlinear decomposition and global well-posedness
  for solutions to the defocusing cubic wave equation on {$\mathbb{R}^3$}},
  Discrete Contin. Dyn. Syst. \textbf{24} (2009), no.~4, 1307--1323.

\bibitem{Suzzoni1}
\SortNoop{Suzzoni}de~Suzzoni, A., \emph{{Large data low regularity scattering
  results for the wave equation on the Euclidean space}}, {Commun. Partial
  Differ. Equations} \textbf{38} (2013), no.~1-3, 1--49.

\bibitem{Suzzoni2}
\bysame, \emph{Consequences of the choice of a particular basis of {$L^2(S^3)$}
  for the cubic wave equation on the sphere and the {E}uclidean space}, Commun.
  Pure Appl. Anal. \textbf{13} (2014), no.~3, 991--1015.

\bibitem{Strichartz}
Strichartz, R., \emph{Restrictions of {F}ourier transforms to quadratic
  surfaces and decay of solutions of wave equations}, Duke Math. J. \textbf{44}
  (1977), no.~3, 705--714.

\bibitem{TTz}
Thomann, L. and Tzvetkov, N., \emph{Gibbs measure for the periodic derivative
  nonlinear {S}chr\"odinger equation}, Nonlinearity \textbf{23} (2010), no.~11,
  2771--2791.

\bibitem{Tz08}
Tzvetkov, N., \emph{Invariant measures for the defocusing nonlinear
  {S}chr\"odinger equation}, Ann. Inst. Fourier (Grenoble) \textbf{58} (2008),
  no.~7, 2543--2604.

\bibitem{Tz10}
\bysame, \emph{Construction of a {G}ibbs measure associated to the periodic
  {B}enjamin-{O}no equation}, Probab. Theory Related Fields \textbf{146}
  (2010), no.~3-4, 481--514.

\bibitem{ZF}
Zhang, T. and Fang, D., \emph{Random data {C}auchy theory for the generalized
  incompressible {N}avier-{S}tokes equations}, J. Math. Fluid Mech. \textbf{14}
  (2012), no.~2, 311--324.

\end{thebibliography}

\end{document}